\theoremstyle{plain}
\newtheorem{thm}{\protect\theoremname}
\theoremstyle{plain}
\theoremstyle{plain}
\theoremstyle{plain}
\newtheorem{lem}[thm]{\protect\lemmaname}
\theoremstyle{definition}
\theoremstyle{definition}
\newtheorem{defn}[thm]{\protect\definitionname}
\theoremstyle{plain}
\newtheorem{notation}[thm]{Notation}
\theoremstyle{plain}
\newtheorem{thmletter}{Theorem}
\newcommand{\Rmnum}[1]{\expandafter\@slowromancap\romannumeral #1@}
  \providecommand{\corollaryname}{Corollary}
  \providecommand{\examplename}{Example}
  \providecommand{\lemmaname}{Lemma}
  \providecommand{\propositionname}{Proposition}
  \providecommand{\theoremname}{Theorem}
  \providecommand{\definitionname}{Definition}
\newcommand{\Id}{\operatorname{Id}}
\newcommand{\GL}{\operatorname{GL}}
\newcommand{\PGL}{\operatorname{PGL}}
\newcommand{\Aut}{\operatorname{Aut}}
\newcommand{\Der}{\operatorname{Der}}
\newcommand{\E}{e}
\newcommand{\F}{\mathbf{f}}
\newcommand{\w}{\mathbf{w}}
\newcommand{\Grp}{\operatorname{Grp}}
\newcommand{\AcGrp}{\operatorname{Grp}^{\operatorname{ac}}}
\newcommand{\weight}{\operatorname{weight}}
\begin{document}

\title{groupoids derived from the simple elliptic singularities}

\author{Chuangqiang~Hu}
 \thanks{Beijing Institute of Mathematical Sciences and Applications,  Beijing 101408, P. R.  China\\
 E-mail: huchq@bimsa.cn}

\author{Stephen S.-T.~Yau}
 \thanks{Corresponding author: Stephen~S.-T. Yau\\
 Beijing Institute of Mathematical Sciences and Applications,  Beijing 101408, P. R.  China\\
  E-mail: yau@uic.edu}
\author{Huaiqing~Zuo}
 \thanks{Department of Mathematical Sciences, Tsinghua University, Beijing 100084, P. R.  China\\
 E-mail: hqzuo@mail.tsinghua.edu.cn}
 
  \thanks{Zuo was supported by NSFC Grant 12271280. Yau was supported by Tsinghua University start-up fund and Tsinghua University Education Foundation fund (042202008).}
  
  \thanks{\textbf{Data availability}: Data sharing not applicable to this article as no datasets were generated or analysed during the current study.}

\maketitle

\begin{abstract}
	K. Saito's classification of simple elliptic singularities includes three families of weighted homogeneous singularities: $ \tilde{E}_{6}, \tilde{E}_7$, and $ \tilde{E}_8 $. For each family, the isomorphism classes can be distinguished by K.~Saito's $j$-functions. By applying the Mather-Yau theorem, which states that the isomorphism class of an isolated hypersurface singularity is completely determined by its $k$-th moduli algebra, M. Eastwood demonstrated explicitly that one can directly recover K.~Saito's $j$-functions from the zeroth moduli algebras. This research aims to generalize M. Eastwood's result through meticulous computation of the groupoids associated with simple elliptic singularities. We not only directly retrieve K.~Saito's $j$-functions from the $k$-th moduli algebras but also elucidate the automorphism structure within the $k$-th moduli algebras. We derive the automorphisms using the methodology of the $k$-th Yau algebra and establish a Torelli-type theorem for the  $\tilde{E}_7 $-family when $k=1$. In contrast, we find that the Torelli-type theorem is inapplicable for the first Yau algebra in the $ \tilde{E}_6 $-family. 
	By considering the first Yau algebra as a module rather than solely as a Lie algebra, we can impose constraints on the coefficients of the transformation matrices, which facilitates a straightforward identification of all isomorphisms. Our new approach also provides a simple verification of the result by Chen, Seeley, and Yau concerning the zeroth moduli algebras.
	
 MSC(2020):  Primary14B05; Secondary 32S05
  
  Keywords: simple elliptic singularity, Tjurina algebra, Yau algebra, weighted homogeneous singularity, Groupoid
\end{abstract}


\section{Introduction}\label{sec:introduction}
 
     We present necessary definitions and auxiliary results concerning the moduli algebras of singularities.
     Let $ \mathcal{O}_n:= \mathbb{C} [[x_1, \ldots ,x_n ]] $ be a formal power series ring over $ \mathbb{C} $ with maximal ideal $  m $.
     Let $ \mathcal{V}_f $ be a germ of an isolated hypersurface singularity at the origin in $ \mathbb{C}^n $ represented as the zero locus of polynomial (or analytic function) $f = f(x_1, \ldots, x_n) $.
	\begin{defn}
		The $ k $-th moduli algebra of $  \mathcal{V}_f  $ is defined by 
     \[ \mathcal{A}^{k}( \mathcal{V}_f ) = \mathcal{O}_n /\langle  f, m^k J(f) \rangle , \]
     where $ J(f) = \langle \partial_{1}(f), \ldots, \partial_{ n}(f)\rangle $ denotes the Jacobi ideal of $f $.
	\end{defn}
      It is convenient to denote the local function algebra (or coordinate ring) of $ \mathcal{V}_f $ by 
     \[ \mathcal{A}^{\infty}(\mathcal{V}_f ) = \mathcal{O}_n  /\langle  f \rangle . \]
     The zeroth moduli algebra is also called Tjurina algebra, and its dimension is referred to as the Tjurina number. It is well known that the moduli algebra
     can serve as a base space of versal deformation of singularities \cite{Arnold2012Singularities}. In our recent work \cite{hu2024k}, we computed the dimension of the $k$-th Tjurina algebra of weighted homogeneous singularities. For  further related studies on on Tjurina algebra, we refer to \cite{Greuel2007, trang1976invariance,milnor1970isolated, hussain2023k}.

     The $k$-th moduli algebras are important due to the Mather-Yau theorem \cite{Mather1982Classification}. The extension of Mather-Yau theorem to positive characteristic was studied in \cite{GreuelPham2017}. We restate the theorem in a slightly different version \cite{Greuel2007}.

	 \begin{thm}[Mather-Yau theorem]
		Let $ f_i $ for $i = 0,1$ denote analytic functions associated with isolated singularities $ \mathcal{V}_{f_i}$ respectively. Then the following conditions are equivalent:
		\begin{enumerate}
			\item The function $f_1$ is contact equivalent to $ f_2 $, i.e., there exists some automorphism $ \phi \in \Aut(\mathcal{O}_n) $ and a unit $ u \in \mathcal{O}_n^* $ such that 
			\[
				\phi(f_1) = u \cdot f_2.  
			\]  
			\item For all $ k\geqslant 0 $, there exists some isomorphism $ \mathcal{A}^k(\mathcal{V}_{f_1}) \cong \mathcal{A}^k(\mathcal{V}_{f_2}) $ of $\mathbb{C}$-algebras. 
			\item There exists some $ k \geqslant 0 $ such that $ \mathcal{A}^k(\mathcal{V}_{f_1}) \cong \mathcal{A}^k(\mathcal{V}_{f_2}) $ as $\mathbb{C}$-algebras. 
		\end{enumerate} 
	 \end{thm}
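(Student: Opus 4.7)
The plan is to establish the cycle of implications (1) $\Rightarrow$ (2) $\Rightarrow$ (3) $\Rightarrow$ (1). The two forward implications amount to verifying that the construction $f \mapsto \mathcal{A}^k(\mathcal{V}_f)$ is well-behaved under contact equivalence, whereas (3) $\Rightarrow$ (1) carries the substantive content.

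For (1) $\Rightarrow$ (2), suppose $\phi(f_1) = u f_2$ with $\phi \in \Aut(\mathcal{O}_n)$ and $u$ a unit. The chain rule gives $\phi(J(f_1)) = J(\phi(f_1))$, and the Leibniz rule yields $J(u f_2) \subseteq u J(f_2) + \langle f_2 \rangle$. Since $\phi$ preserves $m$ and $u$ is a unit, one checks that $\phi(\langle f_1, m^k J(f_1)\rangle) = \langle f_2, m^k J(f_2)\rangle$, so $\phi$ descends to an isomorphism of $k$-th moduli algebras. The step (2) $\Rightarrow$ (3) is trivial.

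For (3) $\Rightarrow$ (1), I would proceed in two stages. \emph{Stage one:} lift the given isomorphism $\Phi: \mathcal{A}^k(\mathcal{V}_{f_1}) \to \mathcal{A}^k(\mathcal{V}_{f_2})$ to a formal algebra map $\tilde\Phi: \mathcal{O}_n \to \mathcal{O}_n$ by selecting preimages $y_i \in m$ of $\Phi(\bar x_i)$ and setting $\tilde\Phi(x_i) = y_i$. Because $\Phi$ is an isomorphism, the $y_i$ generate $m$ modulo $m^2$, so $\tilde\Phi$ is itself an automorphism by the formal inverse function theorem. The induced equality
\[
	\tilde\Phi(\langle f_1, m^k J(f_1)\rangle) = \langle f_2, m^k J(f_2)\rangle
\]
yields a decomposition $\tilde\Phi(f_1) = u f_2 + h$ with $h \in m^k J(f_2)$, and the symmetric argument applied to $\Phi^{-1}$ together with Nakayama's lemma forces $u$ to be a unit.

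\emph{Stage two}, the main obstacle, is to absorb the residual $h$ by a further coordinate change via Mather's homotopy trick. Introduce the one-parameter family $F_t = u f_2 + t h$ for $t \in [0,1]$ and seek a time-dependent formal vector field $X_t = \sum_i a_i(t,x) \partial_i$ with $a_i \in m^k$ together with a scalar $c_t \in \mathcal{O}_n$ satisfying the cohomological equation
\[
	\dot F_t + X_t(F_t) = c_t \cdot F_t.
\]
Since $h \in m^k J(f_2)$ and $J(F_t)$ is a small perturbation of $u J(f_2)$, this equation can be solved iteratively in the $m$-adic topology, with the residual at each step lying in strictly higher powers of $m$ than the previous one. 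Integrating $X_t$ from $0$ to $1$ produces a formal automorphism $\psi$ with $\psi(F_1) = v \cdot F_0 = (vu) f_2$ for some unit $v$, and $\psi \circ \tilde\Phi$ then realises the required contact equivalence. The delicate point is the convergence of this scheme: it is precisely the factor $m^k$ in the Jacobi part of the moduli algebra that guarantees the vector fields $X_t$ vanish to sufficient order at the origin, so that their $m$-adic limit fixes the maximal ideal and hence yields a bona fide formal automorphism.
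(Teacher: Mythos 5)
First, a point of reference: the paper does not prove this theorem at all. It is quoted from the literature (Mather--Yau \cite{Mather1982Classification}, with the $k$-th moduli algebra version taken from \cite{Greuel2007,GreuelPham2017}), so there is no in-paper proof to compare yours against; I can only judge the proposal on its own terms. Your implications (1)$\Rightarrow$(2)$\Rightarrow$(3) are essentially correct, and Stage one of (3)$\Rightarrow$(1) (lifting $\Phi$ to an automorphism $\tilde\Phi$ of $\mathcal{O}_n$ with $\tilde\Phi(\langle f_1,m^kJ(f_1)\rangle)=\langle f_2,m^kJ(f_2)\rangle$) is standard and fine, although the claim that ``Nakayama forces $u$ to be a unit'' is not actually argued and is more delicate than you suggest (note that $f_2\in m^kJ(f_2)$ can happen, e.g.\ for weighted homogeneous $f_2$ and $k\leqslant 1$, a fact the paper itself uses, so the decomposition $\tilde\Phi(f_1)=uf_2+h$ is far from unique and the normalization must be justified).

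The genuine gap is in Stage two, in two places. (i) The solvability of the infinitesimal equation $\dot F_t+X_t(F_t)=c_tF_t$ with $a_i\in m^k$ --- equivalently the inclusion $h\in\langle F_t\rangle+m^kJ(F_t)$, coherently in $t$ --- is asserted, not proved, and it is the entire content of the theorem. Writing $h=\sum_j p_j\partial_jf_2$ with $p_j\in m^k$ and expanding $J(F_t)=uJ(f_2)+f_2J(u)+tJ(h)$, the term $m^kJ(h)$ produces contributions in $m^{2k}\langle\partial_i\partial_jf_2\rangle$ that do not visibly return to $\langle F_t\rangle+m^kJ(F_t)$, so the claimed gain of one power of $m$ per iteration is exactly what must be established; the standard proofs do this by showing the ideal $\langle F_t,m^kJ(F_t)\rangle$ is constant along the family (a semicontinuity/flatness argument over the $t$-line) before integrating, and your sketch contains no substitute for that step. (ii) Your closing justification --- that the factor $m^k$ guarantees the vector fields vanish to sufficient order at the origin --- is vacuous precisely when $k=0$: there $m^0=\mathcal{O}_n$, the coefficients $a_i$ need not lie in $m$, the flow of $X_t$ need not fix the origin, and the integration to a local automorphism breaks down. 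The case $k=0$ is the classical Mather--Yau theorem and is the hardest case; any proof must explain separately how to arrange $a_i\in m$ there. (There is also an unaddressed formal-versus-convergent issue, since the statement concerns analytic $f_i$ while your construction is purely $m$-adic; this is normally handled by Artin approximation.)
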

	 Among all isolated hypersurface singularities, weighted homogeneous singularities have been of particular interest.
	 Recall that a polynomial $ f( x_1 , \cdots, x_n )$ is weighted homogeneous of type	 $(w_1, w_2 , \cdots, w_n )$, where $ w_1 , w_2 , \cdots, w_n $ are fixed positive rational numbers, if it can be expressed as a linear combination of monomials $x_1^{i_1} x_2^{i_2} \cdots x_n^{i_n} $
	 for which 
	 \[ i_1 w_1 + i_2 w_2 + \cdots + i_n w_n = W 
	 \] for some constant $W$. 
	 
	 We view $\mathcal{O}_n$ as a graded algebra by assigning the weight of $x_i$ to be $ w_i $. Let $ I_1, I_2 $ be two homogeneous ideals. For the local algebra $ \mathcal{O}_n / I_i $ with $i = 0,1$, we say that homomorphism $\phi :\mathcal{O}_n / I_1\to\mathcal{O}_n / I_2$ is homogeneous if $\phi$ preserves the weights of $\mathcal{O}_n$.
	 For the case where both $f_1$ and $ f_2$ are weighted homogeneous, a contact equivalence $(\phi,u)$ as described in (1) of the Mather-Yau theorem induces a homogeneous homomorphism by  truncating the higher-order terms.  Therefore, we obtain the equivalent condition of the Mather-Yau theorem:
\begin{enumerate}
\item[(4)] (Suppose that $f_1$ and $f_2$ are weighted homogeneous.) There exists some homogeneous automorphism $ \phi \in \Aut(\mathcal{O}_n) $ and a constant $ \lambda \in \mathbb{C}^* $ such that 
\[
	\phi(f_1) = \lambda \cdot f_2.  
\]  
\end{enumerate}

 In \cite{Saito1974Einfach},  Saito defined a simple elliptic singularity to be a normal surface
     singularity such that the exceptional set of the minimal resolution is a smooth
     elliptic curve, and classified those singularities that are hypersurface singularities into
     the following three weighted homogeneous cases: 
	 \begin{equation}\label{equ:definition}
		\begin{aligned}
			\tilde{E}_6: \;& x^3+y^3+z^3 + t xyz = 0, \quad \text{for $ t \in \mathbb{C}(\tilde{E}_6) $} ; \\
			\tilde{E}_7: \;& x^4+y^4+z^2 + t x^2 y^2 = 0,  \quad  \text{for $ t \in \mathbb{C}(\tilde{E}_7) $} ;\\
			\tilde{E}_8: \;& x^6+y^3+z^2 + t x^4 y = 0,\quad  \text{for $ t \in \mathbb{C}(\tilde{E}_8) $} ;
			\end{aligned}
	 \end{equation}
	 where the restrictions on $t$ ensure the singularities are isolated: 
	 \begin{align*}
		 \tilde{E}_{6}: t \in  \mathbb{C}(\tilde{E}_6) &= \{  t \in \mathbb{C} |    t^3 \not = 27 \}  ;\\
		 \tilde{E}_{7}: t \in  \mathbb{C}(\tilde{E}_7) & = \{  t \in \mathbb{C} |    t^2 \not = 4 \}  ;\\
		 \tilde{E}_{8}: t \in   \mathbb{C}(\tilde{E}_8) & = \{  t \in \mathbb{C} |   4 t^3 \not = -27 \}  . 
		\end{align*}
		Using a result of Noumi and Yamada \cite{noumi1997notes} on the flat structure on the space of
		versal deformations of these singularities, Strachan \cite{2010Simple}
		 explicitly constructed the $G$-functions for these families. Saito determined the  isomorphism  classes of these families by computing the $ j $-functions and concluded the criteria that two singular with coefficients $t$ and $s$ are contact equivalent if and only if $ j(\tilde{E}_i; t) =  j(\tilde{E}_i; s) $, 
		where
		\begin{align*}
			j(\tilde{E}_6;t) &= -\frac{t^3(t^3-216)}{1728 (t^3+27)^3} ; \\
			j(\tilde{E}_7;t) &= \frac{(12+t^2)^3}{108(t^2-4)^2} ; \\
			j(\tilde{E}_8;t) &= \frac{4t^3}{4t^3+27}.
		\end{align*} 
	
	For $i =6,7,8$, we denote by $ \mathcal{A}^k(\tilde{E}_i; t ) $ with $ t \in \mathbb{C}(\tilde{E}_i ) $ the corresponding $k$-th moduli algebra (resp. local function algebra) of $ \tilde{E}_i $-families listed above. With the help of the Mather-Yau theorem, Eastwood \cite{Eastwood2004Moduli} demonstrated explicitly that one can recover directly Saito's $j$-functions  from the zeroth moduli algebra $ \mathcal{A}^0(\tilde{E}_i; t ) $. 
	
	In this paper, we aim to 
	recover Saito's $j$-functions  from the $k$-th moduli algebra $ \mathcal{A}^k(\tilde{E}_i; t ) $ with $ k \geqslant 1 $ by applying the $k$-th version of the Mather-Yau theorem. Furthermore, we pose a finer question: how can we completely describe the homogeneous isomorphisms from $\mathcal{A}^k(\tilde{E}_i; t )$ to $ \mathcal{A}^k(\tilde{E}_i; s ) $? To understand the algebraic structure in this context, we introduce the groupoid $ \Grp^k(E_i)$ of simple elliptic singularity. Our complete description of the groupoid $ \Grp^k(E_i)$ provides more information than the results presented in \cite{Eastwood2004Moduli}.
	
	A (small) groupoid is a small category in which every morphism is invertible. The notion of groupoid generalizes  the concept of a group by replacing the binary operation with a partial function. In this paper, we are interesting in the groupoid \cite{higgins1971categories,brown2006topology} presented in the following manner.
	 \begin{defn}[Action Groupoid]

		If the group  $G $ acts on the set $ X $, then we can form the action groupoid $ \AcGrp(X| g_1, \ldots, g_n )$ (or transformation groupoid) as follows. 
		\begin{enumerate}
			\item The objects are the elements of $X$.
			\item For any two elements $x, y \in X $,  the morphisms from $x $ to $ y $ correspond to the elements $g \in G $ such that  $g x = y $.
			\item Composition of morphisms interprets the binary operation of $G$.
		\end{enumerate}
	 \end{defn}

	 \begin{defn}
		For $i = 6,7,8$ and $ k \in \mathbb{Z}_{\geqslant 0} \cup \{ \infty \}$, we define the groupoid $ \Grp^k(\tilde{E}_i) $  of simple elliptic singularities as follows.
		\begin{enumerate}
			\item The objects of $ \Grp^k(\tilde{E}_i) $ are precisely the coefficients $ t\in \mathbb{C}(\tilde{E}_i) $. 
			\item For $ t, s \in \mathbb{C}(\tilde{E}_i ) $, the morphisms of $ t $ to $s$ are the homogeneous isomorphisms from $ \mathcal{A}^k(\tilde{E}_i; t) $ to  $ \mathcal{A}^k(\tilde{E}_i; s) $ modulo the scalars in $\mathbb{C}^*$. 
		\end{enumerate}
	 \end{defn}

	 This paper devotes to the explicit computation of the groupoids $ \Grp^k(\tilde{E}_i) $. 
	 From the lifting property, any morphism of $ \Grp^k (\tilde{E}_i) $ lifts to an automorphism of weighted projective spaces. Up to the $\mathbb{C}^*$-action, the morphism is essentially contained in the weighted projective general linear group $
	 	\PGL_3^{\mathbf{w}}(\mathbb{C}) $, which is the quotient of the general linear group $\GL_3(\mathbb{C})$ by its subgroup of diagonal matrices 
	 \[
	 	\begin{pmatrix}
			\lambda^{w_1} & 0 & 0 \\
			0 & \lambda^{w_2} & 0 \\
			0 & 0 & \lambda^{w_3} 
		\end{pmatrix}
	 \]
	 for some weights $   (w_1, w_2 , w_3) =:\mathbf{w}$.
	 When $ k $ is large, we obtain 
	  \[ \Grp^k (\tilde{E}_i)  = \Grp^\infty (\tilde{E}_i) ,\]
	  since the morphism in $\Grp^k (\tilde{E}_i)$ preserves the defining functions (see Lemma \ref{lem:groupoid}).

	  For $k=0$ or $1$, $ \Grp^k (\tilde{E}_i) $ contains more morphisms due to the fact that 
	  $f \in mJ(f)$ when $f $ is weighted homogeneous.
	  Finally, we also note that there exist some coefficients $t \in \mathbb{C}(\tilde{E}_i)$ that behave weirdly, referred to as jump points according to references\cite{Seeley1990Variation, hussain2022new}.  The jump points are listed as follows.
	 \begin{enumerate}
		\item For the $ \tilde{E}_{6} $-family, $ t = 0 ,6 , 6 \rho, 6 \rho^2 $;
		\item For the $ \tilde{E}_{7} $-family, $ t = 0 ,\pm 6 $; 
		\item For the $ \tilde{E}_8 $-family, $t = 0 $.  
	 \end{enumerate}
	 Here and afterwards,  $ \rho$ denotes a primitive cubic root of unity. In the following subsections, we will formulate the main results in detail.

	 \subsection{Groupoid of the $ \tilde{E}_6 $-family}
     In \cite{Chen2000Algebraic}, Chen, Seeley and Yau provided a detailed characterization of homogeneous isomorphisms of the moduli algebras arising from the $\tilde{E}_6 $-family, i.e., the morphisms in $ \Grp^0(\tilde{E}_6)$.  Our newly elaborated technique for studying the isomorphisms of  $ \Grp^1(\tilde{E}_6)$ differs from the previous approach used in \cite{Chen2000Algebraic}. In fact, our idea is motivated by the work of Seeley and Yau \cite{Seeley1990Variation}, in which the authors studied the derivation Lie algebras $ \{L_t\} $ associated to the zeroth moduli algebras of $\tilde{E}_7 $ and $\tilde{E}_8 $-families. They proved the Torelli-type theorem saying that the Lie algebras $ L_s $ and $ L_t $ are isomorphic if and only if the corresponding singularities are contact equivalent to each other.  In \cite{benson1986lie}, the Lie algebras of simple elliptic singularity were computed along with several elaborate applications to deformation theory. 
	 
	For the $\tilde{E}_6 $-family, we consider the derivation Lie algebra, written as  $L^1(\tilde{E}_6; t )$, associated with the first moduli algebra $\mathcal{A}^1(\tilde{E}_6; t )$. By equipping $L^1(\tilde{E}_6; t )$ with a natural module structure over the moduli algebra, we discover that the invariants arising from the Lie algebra $L^1(\tilde{E}_6; t )$, together with 
	$ \mathcal{A}(\tilde{E}_6; t)$, are applicable in determining the isomorphisms of the $ \tilde{E}_6 $-family. It turns out that the groupoids $ \Grp^{0}(\tilde{E}_6) $ and $ \Grp^{1}(\tilde{E}_6) $ coincide indicating a simpler proof for 
	Chen-Seeley-Yau's result in \cite{Chen2000Algebraic}. However, we note that the Lie algebra $L^1(\tilde{E}_6; t )$ itself (without the module structure) is insufficient to determine the isomorphism classes of the $\tilde{E}_6$-family. In other words, there is no Torelli-type theorem for $L^1(\tilde{E}_6; t )$ analogous to that in  \cite{Seeley1990Variation}.  The failure of a Torelli-type theorem for the  zeroth Yau algebra can also be found in \cite{elashvili2006lie}.
	  
     We define the subgroup $ G $ of $ \PGL_3(\mathbb{C}) $ generated by the matrices 
     \begin{align*}
     	&	A_1:=
     	\begin{pmatrix}
     		\rho & 0 & 0 \\
     		0 & 1 & 0 \\
     		0 & 0 & 1  
     	\end{pmatrix}, 
     	A_2:=
     	\begin{pmatrix}
     		\rho & \rho^2 & 1 \\
     		\rho^2  & \rho & 1 \\
     		1 & 1 & 1  
     	\end{pmatrix}, 
     	A_3:=
     	\begin{pmatrix}
     		0 & 1 & 0 \\
     		1 & 0 & 0 \\
     		0 & 0 & 1  
     	\end{pmatrix},  
     	A_4:=
     	\begin{pmatrix}
     		1 & 0 & 0 \\
     		0 & 0 & 1  \\
     		0 & 1 & 0  
     	\end{pmatrix}.
     \end{align*}
Note that $ A_3 $ and $ A_4 $ generates the symmetric subgroup on three letters. 
 One can show by direct computation that $ G $ contains totally $ 216 $ matrices and all the entries belong to $ \{ 0, \rho ,  \rho^2, 1 \}$.
 Let $ H $ be the group of fractional linear transformations generated by 
 \[  P : t \mapsto \rho t \]
and 
\[  R : t \mapsto \frac{18-3t }{3 + t} .\]
 It can be checked that $ H $ is a group of order $ 12 $, isomorphic to the alternative group on four letters. In addition, there exists a group homomorphism $ \pi: G \to H $ which sends $ A_1, A_2,A_3,A_4$ to $ P, R, \Id , \Id $ respectively.  
    
    
	We restate and generalize the main result in \cite{Chen2000Algebraic} in the following form.
	\begin{thmletter}\label{thmA}
		(1) For the case $ k = 0$ or $ 1 $, the groupoid $ \Grp^k(\tilde{E}_6) $ restricted to  the objects $\mathbb{C}(\tilde{E}_6) \setminus \{ 0,6,6 \rho, 6 \rho^2\}$
		is represented by the action groupoid 
		 \[ 
		 \AcGrp( \mathbb{C}(\tilde{E}_6) \setminus \{ 0,6,6 \rho, 6 \rho^2\} \, | \, G) ,
		 \]
		 where $A_i$ acts on $ \mathbb{C}(\tilde{E}_6)$ through the morphism $ \pi $ defined above.
		
		 (2)For $ k \geqslant 2 $ or $ k = \infty $, we find that $ \Grp^k( \tilde{E}_6)$ equals the action groupoid  
		 $
			\AcGrp( \mathbb{C}(\tilde{E}_6)\, | \, G )$. 

	  (3)(The Failure of a Torelli-Type Theorem) The first Yau algebra of the $ \tilde{E}_6 $-family is independent on the parameter $t $ and gives rise to a continuous family of $ 11 $-dimensional representations of a solvable Lie algebra.
	\end{thmletter}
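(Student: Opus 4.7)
The overall strategy is to lift every morphism in $\Grp^k(\tilde{E}_6)$ to a linear substitution of variables $M\in\GL_3(\mathbb{C})$. Because the three weights all equal $1$, the standard lifting property for isomorphisms of complete local algebras shows that any homogeneous automorphism of $\mathcal{O}_3$ is of this form, and the $\mathbb{C}^*$-scalar freedom puts the class of $M$ in $\PGL_3(\mathbb{C})$. Parts (1) and (2) are then reduced to identifying which such $M$ descend to isomorphisms of the prescribed moduli algebras.

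For part (2), where $k\geqslant 2$ or $k=\infty$, Lemma \ref{lem:groupoid} forces $M\cdot f_t=\lambda f_s$, which is a finite polynomial system in the entries of $M$ and the scalar $\lambda$. Carrying out this elimination is essentially the classical computation of the linear automorphism group of the Hesse pencil. I would show by direct computation that the complete solution set consists of the $216$ matrices in $G$ (whose entries lie in $\{0,1,\rho,\rho^2\}$), and that the induced action on the parameter $t$ factors through the homomorphism $\pi\colon G\to H$: the variable-permuting matrices $A_3,A_4$ lie in $\ker\pi$, $A_1$ produces the rotation $P$, and $A_2$ produces the M\"obius involution $R$. A coset count $|G|/|H|=18=|\ker\pi|$ cross-checks the list.

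For part (1), with $k=0$ or $1$, the Euler relation $f_t=\tfrac13(x\partial_x f_t+y\partial_y f_t+z\partial_z f_t)$ places $f_t$ inside $mJ(f_t)$, so $\mathcal{A}^k(\tilde{E}_6;t)=\mathcal{O}_3/\langle m^kJ(f_t)\rangle$ and a lift $M$ need only preserve the Jacobian-type ideal. This a priori allows more matrices, and the task is to show that away from the jump points no new morphisms arise. My plan, in place of the Lie-algebra arguments of Seeley--Yau, is to exploit the $\mathcal{A}^1$-module structure on the first Yau algebra $L^1(\tilde{E}_6;t)=\Der(\mathcal{A}^1(\tilde{E}_6;t))$. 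Any algebra isomorphism intertwines both the Lie bracket and the module action, a strictly stronger condition than Lie isomorphism. I would isolate a short list of distinguished homogeneous derivations (the Euler derivation together with a generator of the top-weight cyclic submodule) whose images under $M$ are rigid enough to force $M\cdot f_t=\lambda f_s$, thereby reducing the problem to part (2). The main obstacle is the bookkeeping needed to single out these rigid elements; the jump points $\{0,6,6\rho,6\rho^2\}$ are precisely where the module degenerates and this rigidity breaks down, which explains the restriction on the set of objects.

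For part (3), I would compute $L^1(\tilde{E}_6;t)$ by decomposing derivations by weight. A Hilbert-series calculation gives graded dimensions $(1,3,6,1)$ for $\mathcal{A}^1(\tilde{E}_6;t)$ in degrees $0,1,2,3$ and zero in higher degree, totalling $11$. Listing a basis of homogeneous derivations weight by weight, I expect that every structure constant exhibiting $t$-dependence can be absorbed into a $t$-dependent rescaling of the basis, establishing that the abstract Lie algebra $L^1(\tilde{E}_6;t)$ is the same for every admissible $t$. Solvability follows by exhibiting a terminating derived series, for instance the negative-weight derivations form a nilpotent ideal while the weight-zero part is abelian. The natural action of this fixed Lie algebra on $\mathcal{A}^1(\tilde{E}_6;t)\cong\mathbb{C}^{11}$ then varies nontrivially with $t$, yielding the advertised continuous family of $11$-dimensional representations and demonstrating why a Torelli-type statement for $L^1$ alone must fail.
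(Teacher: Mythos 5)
Your overall architecture tracks the paper's: for part (2) you reduce to the condition $M\cdot f_t=\lambda f_s$ and identify the Hessian group of order $216$ (the paper instead re-derives this list from constraints such as $a_1^3=a_2^3=a_3^3$ extracted from the Yau-algebra module structure, but the classical Hesse-pencil computation is a legitimate shortcut); for part (3) you exhibit a $t$-independent basis of the graded Lie algebra and let the $11$-dimensional module $\mathcal{A}^1(\tilde{E}_6;t)$ carry the $t$-dependence, which is exactly what the paper does. Two small corrections there: the grading of $L^1(\tilde{E}_6;t)$ sits in degrees $0,1,2$, so it is the \emph{positive}-degree part that is the nilpotent ideal (there are no negative-weight derivations), and you should record that $\dim L^1(\tilde{E}_6;t)=22$ away from the jump points (jumping to $24$ at $t\in\{0,6,6\rho,6\rho^2\}$) --- the $11$ you compute is the dimension of the module, not of the Lie algebra.

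The genuine gap is in part (1), at precisely the step you flag as ``the main obstacle.'' You assert that the rigidity of a few distinguished derivations forces $M\cdot f_t=\lambda f_s$ directly, but you supply no mechanism, and the naive version of this claim is the theorem itself rather than a step toward it. What the paper actually proves (Lemma \ref{lem:transpose}) is indirect in two essential ways: writing $\mathcal{H}_\w=\w(x\partial_x+y\partial_y+z\partial_z)$ for a linear form $\w=\alpha x+\beta y+\gamma z$, the double bracket $[\w^2\partial,\mathcal{H}_\w]_t=\tfrac{t}{3}f_{-18/t}(\alpha,\beta,\gamma)\,xyz\,\partial$ shows, after comparison with its image under $\phi_*$, that the \emph{transpose} of the matrix of $\phi$ carries the cubic $f_{-18/s}$ to a multiple of $f_{-18/t}$ --- note both the transposition and the M\"obius change of parameter $t\mapsto -18/t$, neither of which appears in your sketch. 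Only after sandwiching $\Grp^\infty(-\tfrac{18}{s},-\tfrac{18}{t})\subseteq\Grp^0\subseteq\Grp^1$ at the transformed parameters and transposing a second time does one obtain $\Grp^1(\tilde{E}_6)(t,s)\subseteq\Grp^\infty(\tilde{E}_6)(t,s)$. Without this argument (or a genuine substitute) your reduction of part (1) to part (2) is unsupported; you would also still need to explain why the argument breaks exactly at $t\in\{0,6,6\rho,6\rho^2\}$, which in the paper is visible as the degeneration $\dim L^1$ jumping from $22$ to $24$.
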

	We remark that for $k = 0$ or $1$, $
			\AcGrp( \mathbb{C}(\tilde{E}_6)\, | \, G )
		 $ is a subcategory of $ \Grp^k(\tilde{E}_6) $, but it is not fully faithful. 
	Indeed, it can be concluded that 
	$
		\Grp^k(\tilde{E}_6)(0,0)  
	$
	is the group generated by $A_2, A_3, A_4$ and the diagonal matrices. 
	As a consequence of Theorem \ref{thmA}, K.~Saito's $j$-function of the $\tilde{E}_6$-family can be obtained by observing that the integral functions on $\mathbb{C}(\tilde{E}_6) $ fixed by $G$ (or $H$ equivalently) are generated by $ j(\tilde{E}_6; t) $. 

	\subsection{Groupoid of the $ \tilde{E}_7 $-family}
	Consider the group $ H' $ consisting of the following six fractional linear transformations 
	\[
		t \mapsto \pm t, t \mapsto \pm \left( \frac{12-2t}{2+t } \right) , t \mapsto \pm \left(\frac{12 +2 t}{2-t} \right).
		\]
	Note that $ H' $ is isomorphic to the symmetric group $ S^3 $.
	According to \cite{Seeley1990Variation}, instead of $j$-function $ j(\tilde{E}_7;t) $, we have the criteria that the coefficient $t$ is equivalent to $ s $ in $\Grp^k(\tilde{E}_7) $ if and only if $s = h(t) $ for a unique $ h \in H' $.
	We define subgroup $ G'$ of $ \PGL_2^{(1,1)}(\mathbb{C}) $ generated by
	\[
		\begin{pmatrix}
			 1 & 1   \\
		i & -i  
		\end{pmatrix}, 
		\begin{pmatrix}
			1 & 0  \\
	   0 & i  
	   \end{pmatrix}, 
	   \begin{pmatrix}
		0 & 1  \\
	1 & 0  
	\end{pmatrix}.
	\]
	This group is to shown to preserve the polynomial $x^4 + y^4 $ and is isomorphic to the symmetry group $ S^4 $. Thus, there are totally $24$ elements of $ G' $ which are represented by the following matrices
	\[ 
		B_{\alpha, 0} := \begin{pmatrix}
					1 & 0   \\
			   0 & \alpha  
			   \end{pmatrix}, 
			 B_{0, \beta}  := \begin{pmatrix}
				0 & \beta  \\
			1 & 0   
		\end{pmatrix} , 
		B_{\alpha, \beta} := \begin{pmatrix}
		1 & \beta  \\
		\alpha & -\alpha \beta   
	\end{pmatrix} ,
	\]
	where $ \alpha $ and $ \beta $ range over $ \{ \pm 1 , \pm i \} $.

	Notice that $S^4$ has a Klein four-group as a proper normal subgroup, namely the even transpositions ${\Id, (1 2)(3 4), (1 3)(2 4), (1 4)(2 3)}$, with quotient $S^3$.
	In according to the mapping $S^4\to S^3$, there exists a homomorphism 
	\[ \pi' : G' \to H',  
	\]
	defined as 
	\begin{equation}\label{equ:pip}
	\begin{aligned}
		B_{\alpha, \beta} 
		 & \mapsto \left[t \mapsto \frac{\beta^2 (12 -2 \alpha^2 t )}{2 + t \alpha^2 } \right], \\
		 B_{\alpha,0},
		B_{0,\alpha}
		 & \mapsto  [t \mapsto \alpha^2 t].
	\end{aligned}
\end{equation}
	In particular, the kernel of $\pi'$ gives a subgroup \[ G'_0 = \{  B_{ 1,0 } = \Id ,B_{-1,0} ,B_{0, 1 },B_{0, -1 }\} \]
	which is isomorphic to the Klein four-group.
	Using the notation 
	\[ \tilde{B}_{\alpha, \beta , \gamma} := \begin{pmatrix}
		B_{\alpha,\beta} & 0 \\  
		0   & \gamma
	\end{pmatrix} 
	\]
	for $ \gamma \in \mathbb{C}^* $,
	we obtain the following result which parallels to Theorem \ref{thmA}:

	\begin{thmletter}\label{thmB}
		(1) For $ k =0,1 $, the groupoid $ \Grp^k(\tilde{E}_7)$ restricted to $\tilde{E}_7\setminus \{ 0, \pm 6\}$ 
		is identical to the action groupoid	
		\[ 
		\AcGrp( \mathbb{C}(\tilde{E}_7) \setminus \{ 0, \pm 6\} \, | \,  \{ \tilde{B}_{\alpha, \beta, \gamma}, \gamma \in \mathbb{C}^* \}  ) .
		\]
		 Here $\alpha $ and $ \beta $  take values from $ \{ 0,\pm 1, \pm i \} $, and they are not simultaneously zero; and the matrices $\tilde{B}_{\alpha, \beta, \gamma}$ act on the points of $\mathbb{C}(\tilde{E}_7) $ through $ \pi' $, restricting to the $(x,y)$ coordinates.

		(2)If $ k \geqslant 2 $ or $ k = \infty $, then
		 \[  
		 \Grp^k(\tilde{E}_7)(t, s ) = \begin{cases}
			\{  \Id ,\tilde{B}_{-1,0,1} ,\tilde{B}_{0, 1,1},\tilde{B}_{0, -1,1}\}  & \text{when $s = t$};\\
			\{ \tilde{B}_{\alpha,0,1},
		\tilde{B}_{0,\alpha,1} \}  & \text{when $s = \alpha^2 t$}; \\
		 \{ \tilde{B}_{\alpha,\beta, \pm \sqrt{ 2 + \alpha^2 t}} \} & \text{when $s =  \frac{\beta^2 (12 -2 \alpha^2 t )}{2 +  \alpha^2t } $}. \\
		 \end{cases} 
		 \]
		(3) (Torelli-Type Theorem)For $t \in \mathbb{C}(\tilde{E}_7)\setminus \{ 0, \pm 6 \} $, the first Yau algebra determines the isomorphism class of the $ \tilde{E}_7 $-family.
	\end{thmletter}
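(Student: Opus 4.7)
The plan is to combine part (1) of Theorem \ref{thmB} with an intrinsic recovery of the ring structure on $\mathcal{A}^1(\tilde{E}_7; t)$ from the abstract Lie algebra $L^1(\tilde{E}_7; t)$. The easy direction is functorial: if $s = h(t)$ for some $h \in H'$, then by part (1) there is a homogeneous algebra isomorphism $\mathcal{A}^1(\tilde{E}_7; t) \cong \mathcal{A}^1(\tilde{E}_7; s)$, which induces a Lie algebra isomorphism of the first Yau algebras. The substance of the theorem lies in the converse: given any abstract Lie algebra isomorphism $\phi: L^1(\tilde{E}_7; t) \to L^1(\tilde{E}_7; s)$, one must produce an $h \in H'$ with $s = h(t)$.

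For the converse, I would first compute $L^1(\tilde{E}_7; t)$ explicitly using the weighted grading $\mathbf{w} = (1,1,2)$ inherited from $f = x^4 + y^4 + z^2 + t x^2 y^2$, obtaining a decomposition $L^1(\tilde{E}_7; t) = \bigoplus_d L^1_d$ with structure constants polynomial in $t$. Following the strategy of Seeley--Yau in \cite{Seeley1990Variation}, I would then isolate the Euler derivation $E = x\partial_x + y\partial_y + 2z\partial_z$ as a canonical element of $L^1$, characterized intrinsically by the multiplicities of its adjoint eigenvalues on $L^1$ and by its position in the Jordan decomposition of $L^1_0$. After adjusting $\phi$ by suitable inner automorphisms of source and target, one may assume $\phi$ intertwines the two Euler derivations, so that $\phi$ becomes a graded Lie isomorphism. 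Restricting $\phi$ to the lowest positive-weight piece of $L^1$ --- which is intrinsically identified with a canonical quotient of the maximal ideal of the moduli algebra, its multiplicative structure encoded by brackets with $L^1_0$ --- yields a linear map that lifts to a homogeneous coordinate change $\psi \in \Aut(\mathcal{O}_3)$. This $\psi$ then induces an algebra isomorphism between the moduli algebras, at which point part (1) delivers the desired $h \in H'$.

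The main obstacle is the reconstruction step: one must show that a graded Lie isomorphism necessarily takes the form $D \mapsto \psi \circ D \circ \psi^{-1}$ for a coordinate automorphism $\psi$, rather than a purely Lie-algebraic symmetry unrelated to the ring structure. The exclusion of the jump points $t = 0, \pm 6$ is essential here: at those values the stabilizer in $G'$ enlarges, certain weight-space dimensions of $L^1$ increase, and the Lie algebra acquires additional symmetries that fail to lift to algebra automorphisms --- this is precisely the mechanism behind the failure of the Torelli-type theorem for the $\tilde{E}_6$-family recorded in part (3) of Theorem \ref{thmA}. Away from these jump points the weight-space dimensions stay constant, and a careful analysis of the polynomial $t$-dependence of the brackets among low-degree basis derivations pins down the class of $h$ in $H'$ uniquely from $\phi$.
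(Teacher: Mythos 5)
Your proposal covers only part (3) of the theorem, and it does so by \emph{assuming} part (1), which is itself part of the statement to be proved. Parts (1) and (2) require their own arguments: in the paper, part (1) for $k=0$ is obtained by passing to the zeroth Yau algebra and using the Seeley--Yau four lines $l_1(t),\ldots,l_4(t)$ of slopes solving $C_t^2+C_t^{-2}=12/t$ (Lemma \ref{lem:symmetric}), part (1) for $k=1$ is deduced \emph{from} part (3) via the invariance of the line $H_1$ (so the logical order is the reverse of yours), and part (2) follows from Lemma \ref{lem:groupoid}. None of this is in your proposal, so even granting your argument for (3) the proof of the theorem would be incomplete, and the dependence of (3) on (1) would need to be untangled to avoid circularity for the $k=1$ case.

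For part (3) itself, your route --- recover the Euler derivation, reduce to a graded Lie isomorphism, and then show it is of the form $D\mapsto\psi\circ D\circ\psi^{-1}$ for a coordinate automorphism $\psi$ --- leaves its crux unproved. You correctly identify the reconstruction step as ``the main obstacle,'' but you offer no mechanism for overcoming it beyond ``a careful analysis of the polynomial $t$-dependence of the brackets.'' This is not a routine technicality: the same paper shows (Theorem \ref{thmA}(3), Section \ref{sec:representation}) that $L^1(\tilde{E}_6;t)$ has \emph{constant} structure constants in $t$ for all generic $t$, so for $\tilde{E}_6$ a graded Lie isomorphism emphatically does not lift to a ring isomorphism; your parenthetical attributing that failure to jump points is incorrect, and it means you cannot take the reconstruction for granted in the $\tilde{E}_7$ case either. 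The paper avoids reconstruction entirely: Lemma \ref{lem:fourlines} builds, by a chain of purely bracket-theoretic conditions (the subspaces $V$, $Z$, $U$, $V^*$, $U^*$), a canonically \emph{ordered} quadruple of lines $(H_1,H_2,H_3,H_4)$ in the degree-two part of $L^1(\tilde{E}_7;t)$ whose cross-ratio equals $\tfrac{5}{16}\,j(\tilde{E}_7)/(1-j(\tilde{E}_7))$; this numerical invariant directly forces $j(\tilde{E}_7;t)=j(\tilde{E}_7;s)$ whenever the Lie algebras are isomorphic, with no need to show that the Lie isomorphism comes from a coordinate change. To salvage your approach you would have to actually exhibit the intrinsic identification of the multiplicative structure inside $L^1$ and prove the lifting claim, which is precisely the content your sketch omits.
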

	Similar to the $\tilde{E}_6$-family, the matrices of the form $ \tilde{B}_{\alpha, \beta, \gamma} $ generate a proper sub-groupoid of $ \Grp^k (\tilde{E}_7) $ when $k= 0$ or $1$. The integral functions on $ \mathbb{C}(\tilde{E}_7) $ fixed by $ \tilde{B}_{\alpha, \beta, \gamma} $ (or $H'$ equivalently)
	are generated by $ j(\tilde{E}_7;t)$.

	\subsection{Groupoid of the $\tilde{E}_8 $-family}
	The criteria for the isomorphism class of the $\tilde{E}_8 $-family is much simpler. It follows from the $j$-function $ j(\tilde{E}_8; t)$ that  $t$ is isomorphic to $s$ in $ \Grp^k(\tilde{E}_8)$ if and only if $s =at$ with $a^3 =1$. Let $ C_{ \rho^i, \gamma} $ be the matrix of the form 
	\[ \begin{pmatrix}
		1 & 0 & 0 \\
		0 & \rho^i & 0 \\
		0 & 0 & \gamma
	\end{pmatrix}
	\]
	with $ \gamma \in \mathbb{C}^* $. The collection of matrices $  \{ C_{ \rho^i, \gamma} \} $ with $i=0,1,2$, $c \in \mathbb{C}^*$ forms a subgroup of $ \PGL_3^{\mathbf{w}}(\mathbb{C}) $ with $ \mathbf{w} = (1,2,3) $.  
	For $ k = 0$ or $1$, the matrix $ C_{\rho^i, \gamma}$ induces an isomorphism from $ \mathcal{A}^k( \tilde{E}_8, t)$ to $ \mathcal{A}^k( \tilde{E}_8, \rho^i t)$, represented by the transformation
	\[
		x= x', y =\rho^i y' , z= \gamma z' .
	\]
	\begin{thmletter}\label{thmC}
	(1) For $ k = 0 $ or $ 1$,the groupoid $ \Grp^k(\tilde{E}_8 )$ is given by the union of the action groupoid  
	\[
		\AcGrp( \mathbb{C}(\tilde{E}_8) \setminus \{ 0 \} \, |\, \{ C_{\rho^i, \gamma} | \gamma \in \mathbb{C}^* \} )
	\]
	and the group of diagonal matrices (as a groupoid with a single object  $ 0 \in \mathbb{C}(\tilde{E}_8) $).

	(2) While for $k \geqslant 2 $ or $ k = \infty $, the groupoid $ \Grp^k(\tilde{E}_8 )$ is given by the action groupoid  
		$
			\AcGrp( \mathbb{C}(\tilde{E}_8) \,| \, \{ C_{\rho^i, 1} \} )		$.
	\end{thmletter}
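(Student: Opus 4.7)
The plan is to pass from any morphism $\bar\phi \in \Grp^k(\tilde{E}_8)(t,s)$ to a weight-preserving ambient automorphism of $\mathcal{O}_3$ and then impose the appropriate ideal-preservation condition. Since the weights $\mathbf{w}=(1,2,3)$ are pairwise distinct, any weight-preserving $\phi \in \Aut(\mathcal{O}_3)$ must take the form
\[
\phi(x)=ax,\qquad \phi(y)=by+cx^2,\qquad \phi(z)=dz+ex^3+fxy,
\]
for constants $a,b,d \in \mathbb{C}^*$ and $c,e,f \in \mathbb{C}$. Working in $\PGL_3^{\mathbf{w}}$, I normalize $a=1$, so that determining $\Grp^k(\tilde{E}_8)$ reduces to finding the admissible tuples $(b,c,d,e,f)$.

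For part (2), Lemma~\ref{lem:groupoid} upgrades the condition to the exact identity $\phi(f_t)=\lambda f_s$ for some $\lambda \in \mathbb{C}^*$. Expanding $\phi(f_t)$ and matching the coefficients of the seven weight-six monomials $x^6,\, y^3,\, z^2,\, x^4 y,\, x^2 y^2,\, x^3 z,\, xyz$, the vanishing of the $x^3 z$ and $xyz$ coefficients ($2de=2df=0$) forces $e=f=0$; then the $x^2 y^2$ coefficient ($3b^2c=0$) forces $c=0$; finally $a^6=b^3=d^2=\lambda$ together with $ta^4 b=\lambda s$ combine to yield $s=(b/a^2)\,t$ with $(b/a^2)$ a cube root of unity and $d$ determined up to sign by $a$. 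After the normalization this matches the claimed diagonal action $t \mapsto \rho^i t$ by the subgroup $\{C_{\rho^i,\cdot}\}$ of $\PGL_3^{\mathbf w}$.

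For part (1), I cannot use $\phi(f_t)=\lambda f_s$ directly but only that $\phi$ carries $\langle f_t, m^k J(f_t)\rangle$ into the corresponding ideal for $s$. With $J(f_t)=\langle 6x^5+4tx^3 y,\; 3y^2+tx^4,\; z\rangle$, a weighted-degree analysis of the target ideal pinpoints all constraints: applied to $\partial_z f_t$ (when $k=0$) or $x\partial_z f_t$ (when $k=1$), the lowest-weight membership forces $e=f=0$; applied to $\partial_y f_t$ or $x\partial_y f_t$, the next weight forces $c=0$ and gives $s=ta^4/b^2$; applied to $\partial_x f_t$ or $x\partial_x f_t$, the final weight yields $s=tb/a^2$. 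For $t \neq 0$ these two expressions for $s$ combine to $b^3=a^6$, while no equation constrains $d \in \mathbb{C}^*$, recovering the full family $C_{\rho^i,\gamma}$ with arbitrary $\gamma \in \mathbb{C}^*$. For $t=0$ only the constraints $c=e=f=0$ survive, so the entire diagonal torus $\operatorname{diag}(1,b,d)$, $b,d\in\mathbb{C}^*$, realises automorphisms of $\mathcal{A}^k(\tilde E_8;0)$, accounting for the jump at the origin.

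The most delicate step is the weight-filtration bookkeeping in part (1), in particular ensuring that the correct substitute generators are used at $k=1$---where neither $\partial_y f_t$ nor $\partial_x f_t$ itself lies in $m J(f_t)$, and one must instead analyse $x\partial_y f_t$ and $x\partial_x f_t$---and that no accidental ideal-membership relations at borderline weights are overlooked. Once these checks are settled, the composition law $C_{\rho^i,\gamma_1}\cdot C_{\rho^j,\gamma_2}=C_{\rho^{i+j},\gamma_1\gamma_2}$ assembles the pieces into the advertised action groupoid.
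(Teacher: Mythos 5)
Your proposal is correct and follows essentially the same route as the paper: reduce a morphism to the normal form of a weight-preserving automorphism of $\mathcal{O}_3$, use the low-weight members $m^k z$ and $m^k(3y^2+tx^4)$ of the defining ideal to force diagonality, and then compare the remaining generators (resp.\ $f_t$ itself) to pin down the relation between $s$ and $t$ and the freedom in the $z$-coefficient. The one point to tighten is your appeal to Lemma~\ref{lem:groupoid} for the exact identity $\phi(f_t)=\lambda f_s$ when $k=2,3$: there $m^kJ(f_t)$ still has generators of weight at most $6$ (e.g.\ $xyz$ and $x^2(3y^2+tx^4)$ at $k=2$), so the weight-six matching must be carried out modulo these extra generators, as the paper does explicitly, although the conclusion is unchanged.
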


	\section{Preliminary and notations}
   \subsection{Homogeneous isomorphism}
   In this paper, we denote by $ \mathcal{O}_n$ the formal local ring $\mathbb{C}[[x_1,\ldots,x_n]]$, and assume that the weight of $ x_i $ equals $w_i $. For simplicity, we assume the weights $ w_i $ are positive integers. 
   Given $ I_t $ and $ I_s  $, the two homogeneous ideals of  $ \mathcal{O}_n $, i.e., the generators in $I_t$ and $I_s$ are weighted homogeneous, we consider the graded analytic algebras $ \mathcal{O}_n / I_t $ and $ \mathcal{O}_n / I_s $.  By the well-known lifting lemma, the isomorphism from  $ \mathcal{O}_n / I_t $ to $ \mathcal{O}_n / I_s $ is given by 
   $ \phi  \in \Aut (\mathcal{O}_n) $ such that
   \[ \phi (I_t) \subseteq I_s, \text{and } \phi^{-1}(I_s) \subseteq I_t. 
   \]
   One can express $ \phi : \mathcal{O}_n \to \mathcal{O}_n$ as
  \[  x_j \mapsto \phi^{(0)}_j(x_1,\ldots, x_n) + \phi^{(+)}_j(x_1,\ldots, x_n)  \text{ for $j =1, \ldots, n $}, \]
  such that the monomials in $ \phi^{(0)}_j(x_1,\ldots, x_n) $ have weight $ w_j $, while those in $ \phi^{(0)}_j(x_1,\ldots, x_n) $ have weight $ > w_j $.
  The homogeneous component $\phi^{(0)} $ of $\phi$, given by $\phi^{(0)} = (\phi_j^{(0)})_{j}$, forms an isomorphism if and only if $\phi$ is an isomorphism. Consequently, each isomorphism from $\mathcal{O}_n / I_t$ to $\mathcal{O}_n / I_s$ induces a homogeneous isomorphism.

A $\mathbb{C}^*$-action is present on $\mathcal{O}_n$ defined as:
\[
   \lambda \in \mathbb{C}^* : x_j \to \lambda^{w_i} x_j,
\]
resulting in a trivial automorphism of $\mathcal{O}/I_t$. Moreover, if $\phi(x_1,\ldots,x_n)$ represents a homogeneous isomorphism, then $\phi(\lambda^{w_1}x_1,\ldots,\lambda^{w_n}x_n)$ also signifies a homogeneous isomorphism. This establishes the $\mathbb{C}^*$-action within the set of homogeneous isomorphisms.

\subsection{Yau algebra with graded module structure}
 A graded Lie algebra is an ordinary Lie algebra $g $ together with a gradation of vector spaces
 \[ 
	\mathfrak{g} = \oplus_{i \in \mathbb{Z} } \mathfrak{g}_i \]
such that the Lie bracket respects this gradation:
\[ 
 [\mathfrak{g}_i ,\mathfrak{g}_j ] \subseteq \mathfrak{g}_{i + j } .
 \]
 
 \begin{defn}
	 For an isolated hypersurface singularity  $ \mathcal{V}_f $ determined by a polynomial $ f = f(x_1, \ldots ,x_n) $, we define the $ k $-th Yau algebra of $ \mathcal{V}_f $ by
	 \[  L^{k} (\mathcal{V}_f): = \Der(\mathcal{A}^{k}(\mathcal{V}_f),\mathcal{A}^{k}(\mathcal{V}_f)) ; \]
	 namely the derivation Lie algebra of the $k$-th moduli algebra.
 \end{defn}
 We need to introduce a graded $ \mathcal{A}^{k}(\mathcal{V}_f)$-module structure of $L^{k} (\mathcal{V}_f)$.
Denote by $\partial_i $ the partial derivation with respect to $x_i $.
By definition, a derivation $ \delta \in L^{k} (\mathcal{V}_f) $ is of the form
\[ \delta =  \sum_{i=1}^{n} a_i \partial_i, \]
where the coefficients $a_i \in \mathcal{A}^{k}(\mathcal{V}_f)$ satisfy the condition
\[
 \delta \cdot \langle f, m^k J(f) \rangle \subseteq \langle f, m^k J(f) \rangle. 
\]
In this way, we obtain the embedding map of $ \mathcal{A}^{k}(\mathcal{V}_f) $-modules:
\begin{equation}\label{equ:Lk}
  L^{k} (\mathcal{V}_f) \to  \mathcal{A}^{k}(\mathcal{V}_f) \langle \partial_{1} , \ldots, \partial_{ n} \rangle 
\end{equation}
where $ \mathcal{A}^{k}(\mathcal{V}_f) \langle \partial_{1} , \ldots, \partial_{ n} \rangle $ denotes the free $\mathcal{A}^{k}(\mathcal{V}_f)$-module with generator $ \partial_1, \ldots, \partial_n$.

 With the assumption that $ f $ is a weighted homogeneous polynomial, the algebra $ \mathcal{A}^{k}(\mathcal{V}_f) $ admits a graded algebra structure since 
 the ideal $\langle f, m^k J(f) \rangle $ is homogeneous.
 For a monomial $\mathcal{M}$ in $ \mathcal{A}^{k}(\mathcal{V}_f) $, we define the notion of degree  as follows:
\[
 \deg (\mathcal{M}\partial_{x_i}) = \weight(\mathcal{M}) - \weight(x_i). 
\] 
Based on this grading, the free 
$\mathcal{A}^k(\mathcal{V}_f) $-module $\mathcal{A}^k(\mathcal{V}_f)\langle \partial_{1}, \ldots, \partial_{ n} \rangle$ is regarded as a graded Lie algebra. Similarly, the $k$-th Yau algebra  $ L^k (\mathcal{V}_f)$  inherits a graded Lie algebra structure.

We utilize the crucial observation that a homogeneous isomorphism of $k$-th moduli algebras leads to an isomorphism of the corresponding $k$-th Yau algebras.
For weighted homogeneous polynomials $f_1$ and $f_2$, a homogeneous isomorphism
\[ 
\phi: \mathcal{A}^k(\mathcal{V}_{f_1})   \to  \mathcal{A}^k(\mathcal{V}_{f_2}) \]
 induces a Lie-algebraic homomorphism 
\[
	\phi_*: L^k (\mathcal{V}_{f_1})\to L^k (\mathcal{V}_{f_2}).
\]
Such homomorphism $\phi_*$ is $\phi$-equivalent meaning that 
\[
 	\phi_*(g \E_1) = \phi(g) \phi_*(\E_1) 
\]
and 
\[
 	\phi_*[\E_1, \E_2] = [\phi_*\E_1 , \phi_*\E_2 ]
\]
hold for $ g \in \mathcal{A}^k(\mathcal{V}_{f_1})$ and $ \E_1, \E_2 \in L^k (\mathcal{V}_{f_1})$.
\subsection{Notation of homomorphisms for Lie algebra}

When focusing on the case where  $ \mathcal{V}_f $ from $\tilde{E}_i$-families, we consistently employ the subsequent notations to denote homogeneous isomorphisms of the $k$-th moduli algebras of simple elliptic singularities throughout this paper.

 \begin{notation}
	Suppose that $(x', y', z')$ are the coordinates of $\mathcal{A}^k(\tilde{E}_i; s)$ and $(x, y, z)$ are the coordinates of $\mathcal{A}^k(\tilde{E}_i; t)$. We denote a homogeneous isomorphism $\phi$ from $\mathcal{A}^k(\tilde{E}_i, t)$ to $\mathcal{A}^k(\tilde{E}_i, s)$ through coordinate transformations as:
	\begin{equation}\label{equ:phi_i}
		\begin{cases}
			x = \phi_1(x',y',z');\\
			y = \phi_2(x',y',z');\\
			z = \phi_3(x',y',z'),
			\end{cases}
	\end{equation}
	where $\phi_i $'s are homogeneous functions.
\end{notation}
Specifically, when all $w_i = 1$, $\phi$ uniquely corresponds to a nonsingular linear transformation of $\mathbb{C}^3$. One can represent $\phi$ as a linear transform
\begin{equation}\label{equ:phi}
     \begin{pmatrix}
	x \\
	y \\
	z  
\end{pmatrix}=
\begin{pmatrix}
	a_1& a_2& a_3 \\
	b_1& b_2& b_3 \\
	c_1& c_2& c_3  
\end{pmatrix}
\begin{pmatrix}
	x '\\
	y '\\
	z'  
\end{pmatrix}.
\end{equation}
Its inverse can be expressed as  
\begin{equation}\label{equ:inverse}
	\begin{pmatrix}
		x '\\
		y' \\
		z  '
	\end{pmatrix}=
	\begin{pmatrix}
		a_1'& a_2'& a_3' \\
		b_1'& b_2'& b_3' \\
		c_1'& c_2'& c_3'  
	\end{pmatrix}
	\begin{pmatrix}
		x \\
		y \\
		z  
	\end{pmatrix}.
\end{equation}  
We define $ \mathcal{A}^k(\tilde{E}_i;t) \langle \partial_x, \partial_y, \partial_z \rangle  $ to be the free $ \mathcal{A}^k(\tilde{E}_i; t) $-module generated by $ \partial_x, \partial_y,\partial_z $. 

As in \eqref{equ:Lk}, we possess the embedding map
\[
L^k(\tilde{E}_i; t) \to  \mathcal{A}^k(\tilde{E}_i;t) \langle \partial_x, \partial_y, \partial_z \rangle   .
\]
According to Leibniz's Rule, we derive the formulas for the differential of $\phi$:
 \begin{equation}\label{equ:partial}
\begin{aligned}
	 \phi_* (\partial_{x}) &= \partial_{x} (x') \partial_{x'} +\partial_{x} (y') \partial_{y'} +\partial_{x} (z') \partial_{z'} = a'_1 \partial_{x'} + b'_1 \partial_{y'} + c'_1 \partial_{z'} ;\\
	  \phi_* (\partial_{y}) &= \partial_{y} (x') \partial_{x'} +\partial_{y} (y') \partial_{y'} +\partial_{y} (z') \partial_{z'} = a'_2 \partial_{x'} + b'_2 \partial_{y'} +c'_2 \partial_{z'} ;\\
	   \phi_* (\partial_{z}) &= \partial_{z} (x') \partial_{x'} +\partial_{z} (y') \partial_{y'} +\partial_{z} (z') \partial_{z'} = a'_3 \partial_{x'} + b'_3 \partial_{y'} +c'_3 \partial_{z'} .
\end{aligned}
\end{equation}
 In this way, we obtain a $\phi$-equivalent map  
 \begin{align*}
	\phi_*:  & L^k(\tilde{E}_i; t)  \to L^k(\tilde{E}_i; s)\\
	 \phi_*(x^ay^bz^c \partial_i) = & \phi_1(x',y',z')^a\phi_2(x',y',z')^b\phi_3(x',y',z')^c \phi_*(\partial_i),
 \end{align*}
 which preserves the Lie structures.
 \subsection{Basis property of groupoid} 
 We examine some fundamental properties of the simple elliptic groupoid $\Grp^k(\tilde{E}_{i}) $ associated to $\mathcal{A}^k(\tilde{E}_i; t) $. Let $ f_t $ be the defining function of the $\tilde{E}_i$-family as described in Equation \eqref{equ:definition}. Recall that the $ k $-th moduli algebra is defined by the formal local ring $ \mathcal{O} $ modulo the ideal 
 \[
	 I_k(t) = \langle f_t, m^k J(f_t)\rangle.
 \]
 In this context, a morphism $\phi$ of $ \Grp^k(\tilde{E}_i)(t,s)$ is expressed as an automorphism in $
	  \PGL_3^{\mathbf{w}}(\mathbb{C}) $ such that $ \phi(I_k(t)) = I_k(s) $.
\begin{lem}\label{lem:groupoid}
	\begin{enumerate}
		\item The groupoid $\Grp^\infty(\tilde{E}_{i}) $ is a sub-groupoid of  $\Grp^k(\tilde{E}_{i}) $ for each $ k \geqslant 0 $. 
	\item When $ l  $ is sufficiently large, we have 
	\[   \Grp^k(\tilde{E}_{i}) =\Grp^\infty(\tilde{E}_{i}) \]
	for $ k \geqslant l $. 

	\item The groupoid $\Grp^0(\tilde{E}_{i}) $ is a sub-groupoid of  $\Grp^1(\tilde{E}_{i}) $. 
	\end{enumerate}
\end{lem}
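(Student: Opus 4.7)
The plan is to reason at the level of the ambient ring $\mathcal{O}_n$ via the lifting procedure recalled in Section 2.1: every morphism of $\Grp^k(\tilde{E}_i)$ lifts to a homogeneous automorphism $\phi \in \Aut(\mathcal{O}_n)$ satisfying $\phi(I_k(t)) = I_k(s)$, so each of the three claims reduces to an inclusion of homogeneous ideals. I will repeatedly use two elementary facts: $\phi(m) = m$, because $\phi$ sends each $x_i$ to a weighted-homogeneous polynomial of positive weight and is bijective; and $\phi(IJ) = \phi(I)\phi(J)$ for any ideals $I, J \subseteq \mathcal{O}_n$.

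For part~(1), I start with a morphism of $\Grp^\infty(\tilde{E}_i)(t,s)$ lifted to $\phi$ with $\phi(f_t) = \lambda f_s$ for some $\lambda \in \mathbb{C}^*$. Applying $\partial_{x_j'}$ to both sides and using the chain rule yields the relation $\lambda\,\partial_{x_j'} f_s = \sum_i \phi(\partial_{x_i} f_t)\cdot \partial_{x_j'}\phi(x_i)$. The matrix $M = (\partial_{x_j'}\phi(x_i))_{i,j}$ is invertible over $\mathcal{O}_n$, since its value at the origin is the linear part of $\phi$ on $m/m^2$, and this is bijective because $\phi$ is an automorphism. Inverting $M$ expresses each $\phi(\partial_{x_i} f_t)$ as an $\mathcal{O}_n$-linear combination of the $\partial_{x_j'} f_s$, giving $\phi(J(f_t)) \subseteq J(f_s)$; combined with $\phi(m^k) = m^k$ and $\phi(f_t) = \lambda f_s$ this shows $\phi(I_k(t)) \subseteq I_k(s)$, and the reverse inclusion follows by applying the identical argument to $\phi^{-1}$.

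For part~(2), I run a weight estimate. Let $W = \weight(f_t)$ and let $w_{\min}, w_{\max}$ denote the extremal coordinate weights for the given $\tilde{E}_i$-family. Any generator $x^\alpha \partial_{x_i} f_t$ of $m^k J(f_t)$ has weight at least $k\,w_{\min} + (W - w_{\max})$, so choosing $l$ with $l\,w_{\min} > w_{\max}$ ensures that for $k \geqslant l$ the weight-$W$ homogeneous component of $I_k(t) = \langle f_t, m^k J(f_t)\rangle$ is exactly $\mathbb{C}\cdot f_t$. A morphism $\phi \in \Grp^k(\tilde{E}_i)(t,s)$ preserves the grading by weight, so it restricts to an isomorphism of one-dimensional spaces $\mathbb{C}\cdot f_t \to \mathbb{C}\cdot f_s$; this forces $\phi(f_t) = \lambda f_s$ and hence $\phi \in \Grp^\infty(\tilde{E}_i)(t,s)$. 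Together with part~(1) this yields equality.

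For part~(3), Euler's identity $W f_t = \sum_i w_i\,x_i\,\partial_{x_i} f_t$ for weighted homogeneous $f_t$ places $f_t$ inside $m J(f_t)$, so $I_0(t) = J(f_t)$ and $I_1(t) = m J(f_t)$. A morphism in $\Grp^0(\tilde{E}_i)(t,s)$ lifts to $\phi$ with $\phi(J(f_t)) = J(f_s)$, and applying $\phi(m) = m$ along with $\phi(IJ) = \phi(I)\phi(J)$ gives $\phi(m J(f_t)) = m J(f_s)$, placing the morphism in $\Grp^1(\tilde{E}_i)(t,s)$. The main obstacle I anticipate is the Jacobian invertibility step in part~(1): one must justify that, even when the weights $w_i$ differ, the linear part of $\phi$ on $m/m^2$ is a genuine bijection, which is what guarantees $\det M(0) \neq 0$ and hence $M \in \GL_n(\mathcal{O}_n)$.
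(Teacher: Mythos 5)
Your proof is correct and follows essentially the same route as the paper: part (2) via the observation that for large $k$ the generator $f_t$ is the unique (up to scalar) weight-$W$ element of $I_k(t)$, forcing $\phi(f_t)=\lambda f_s$, and part (3) via the Euler identity $f_t \in mJ(f_t)$, the resulting equality $I_1(t)=mI_0(t)$, and $\phi(m)=m$. For part (1) the paper merely cites the fact that an isomorphism of coordinate rings induces isomorphisms of all $k$-th moduli algebras; your chain-rule and invertible-Jacobian argument is the standard justification of exactly that fact, so the two approaches coincide.
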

\begin{proof}
	
	(1) The first assertion is deduced from the fact that the isomorphism of the coordinate ring induces weighted homogeneous isomorphisms of the $k$-th moduli algebras.

	(2)
	For sufficiently large $k$, we can assume that the weights of the generators in $m^k J(f_t)$ exceed the weight of $f_t$. Consequently, the homogeneous polynomial $f_t$, as a generator of the defining ideal $I_k(t)$ of the algebra $ \mathcal{A}(\tilde{E}_i; t)$,  has the minimal weight among the generators. Therefore, each morphism in $ \Grp^k(\tilde{E}_i)$ yields a transformation $ \phi $ of $\mathcal{O}_n$ which satisfies the equation
	\[ \phi f_t(x,y,z) = \lambda \cdot f_s(x',y',z') \]
	for some constant $ \lambda $. This relationship yields that $\phi$ is  essentially a weighted homogeneous isomorphism between $k$-moduli algebras.

	(3) When $k=0,1$, we have $ f_t \in mJ(f_t) $ for any weighted homogeneous polynomial $f_t$. Then 
	\[
	I_1(t) = \langle f_t, m J(f_t) \rangle =\langle  m J(f_t) \rangle  = m I_0(t).
	\]
	Note that a homogeneous isomorphism $ \phi $ preserves the maximal ideal $m$. Therefore, a homogeneous isomorphism $ \phi \in \Grp^{0}(\tilde{E}_i)$ is also an isomorphism in the context of the first moduli algebras.
\end{proof}

\section{Groupoid of the $\tilde{E}_8$-family}
In this section, we focus on the isomorphisms of the $ \tilde{E}_8 $-family. Recall that the $\tilde{E}_8 $-family is defined by the polynomial 

\[
f_t = x^6 + y^3 + z^2 + t x^4 y = 0, \quad \text{where } 4 t^3 \neq -27,
\]
which is weighted homogeneous of type $(1, 2, 3; 6)$. By definition, the \( k \)-th moduli algebra takes the form
\[
\mathcal{A}^k(\tilde{E}_8; t) = \mathbb{C}[[x,y,z]] / I_k(t),
\]
where 
\[
I_k(t) = \left\langle x^6 + y^3 + z^2 + t x^4 y, \, m^k (3x^5 + 2t x^3 y), \, m^k(3y^2 + t x^4), \, m^k z \right\rangle.
\]
Our goal is to determine the isomorphism classes of $\mathcal{A}^k(\tilde{E}_8 ; t) $ for various values of $t $.
 Since the monomials $y , x^2 $ are both have weight $2$ and $x$ is the unique monomial of weight $1$, any homogeneous automorphism $ \phi $ of $ \tilde{E}_8$-families can be expressed as 
\[
\begin{cases}
   x= \lambda  x',\\
y= \lambda ^2  ( ay'+bx^{\prime 2} ),\\
z = \lambda^3 c z ' + d x'^3 + e x' y'
\end{cases}	
\lambda,a ,c \in \mathbb{C}^*, b \in \mathbb{C} 
\]
in accordance with the notation in \eqref{equ:phi_i}.
 Up to scaling, we can always assume that $ \lambda = 1 $. 
 The possible generators with weight less than $k+3$ are $x^k z $ and $ f_s $ (when $3 < k$).
From the fact
\[ x^k z = 0 \mod I_k (t), \]
we have 
\[
   x'^k (z' + d x'^3 + e x' y') =0 \mod ( x'^k z' ,  f_s ).
\]  
This implies that $ d = e = 0 $. 

Similarly, we have
\[ x^k (3 y^2 + t x^4 ) = 0 \mod I_k (t), \] 
and the possible generators in $I_k(s)$ with weight $\leqslant
k+4 $ are $x^k (3 y^2 + t x^4) , z $ and $   f_s $.
\[
x'^k (3 (ay'+bx'^2)^2 +t x'^4) =0 \mod (  f_s,  z,  x'^k (3 y'^2 + s x'^4 ))
\]
This implies that $ b = 0 $. 
In conclusion, we find that the automorphism $ \phi $ is represented by a diagonal matrix.
\subsection{Case $ k \geqslant 2 $}
The generator in $I_k(t)$ with weight less than or equal to $6$ are $f_t$,  $
  x^3 z$ (for $k=3$), and $ xy z $, $ x^2(3y^2 +tx^4)$ (for $k =2  $).
From this, we obtain the relation 
\[
x^6+y^3+z^2 + t x^4 y = c^2 (x'^6+y'^3+z'^2 + s x'^4 y' ) \mod (f_s, x'^3z' , x'y' z',x'^2(3y^2 +s x'^4) ).
\]
This simplifies to:
\[
x^6+y^3+z^2 + t x^4 y = c^2 (x'^6+y'^3+z'^2 + s x'^4 y' ) .
\]
Since $ x = \phi_1(x') = x' $, we find that $c =1 $. Hence, we have 
\[  a =\rho^i , b = 0 ,c =1 ,s = t \rho^i .
\]
This completes the proof of the case $k=2$ in Theorem \ref{thmC}.

\subsection{Case $ k = 0  $}
From relations
\[
   \begin{cases}
	   3x^5 + 2t x^3y = 3 y^2 + t x^4=0  \mod(I_0(t))  \\
	   3x^{\prime 5} + 2s x^{\prime 3}y = 3 y^{\prime 2} + s x^{\prime 4}=0 \mod(I_0(s)) ,
   \end{cases}	
\]
we obtain 
\begin{align*}
	3x^5 + 2t x^3y =& 3x^{\prime 5} + 2s x^{\prime 3}y \\
	3 y^2 + t x^4=& 3 y^{\prime 2} + s x^{\prime 4}  .
\end{align*} 
Therefore, we have 
\[
	t = s a , \text{ and } t = \frac{s}{ a^2}.  	
\]
For $ t \not = 0$, we obtain  
\[
    a^3 =1, s= ta .
\]
Note that in this case, there is no essential restriction on  $ \phi(z) = c z ' $, allowing $c$ to be any arbitrary element in $\mathbb{C}^*$. For $t = 0 $, then we obtain $s =0 $ and there are no essential restrictions on $ a  $ and $c $. 
\subsection{ Case $ k = 1 $}
This case is essentially the same with $  k = 0 $ by replacing $ J(f_t) $ with $ m J(f_t) $. 
This completes the proof of Theorem \ref{thmC}.
  \section{Groupoid of the $ \tilde{E}_6 $-family}
   Now we turn  our attention to the study of simple elliptic singularity $\tilde{E}_6 $.
  For the parameter $ t \in \mathbb{C}(\tilde{E}_6)$ associated with the $ \tilde{E}_6 $-family, the isolated hypersurface singularity is defined by the polynomial
  \[ f_t : = x^3 + y^3 + z^3 +t xyz. \] 
 Note that the Jacobi ideal of $f_t$ is given by
\[ J(f_t) = \langle  3 x^2 + t yz  , 3 y^2 + t xz , 3 z^2 + tx y  \rangle . \]
We have
\[ \mathcal{A}^{0}(\tilde{E}_6; t) = \mathbb{C} [[  x,y,z ]]  /  \langle 3 x^2 + t yz  , 3 y^2 + t xz , 3 z^2 + tx y \rangle , \]
and
\[ \mathcal{A}^{1}(\tilde{E}_6; t) = \mathbb{C} [[  x,y,z ]]  /   \langle   3 x^3 + t xyz  , 3 y^3 + t xyz , 3 z^3 + tx yz, x^2 y , x^2z ,y^2 x, y^2z ,z^2x, z^2y  \rangle  . \]
For $ k \geqslant 2 $, it is easy to see that
\[ m^{k+2} \subseteq m^{k} J(f_t) \subseteq  m^{k+2} . \]
Hence, $ m^{k} J(f_t) = m^{k+2} $. This implies that for $ k \geqslant 2$, 
\[ \mathcal{A}^{k}(\tilde{E}_6; t)  = \mathbb{C} [[x,y,z]] / \langle f_t , m^{k} J(f_t) \rangle  =  \mathbb{C} [[x,y,z]] / \langle f_t , m^{k+2}\rangle. \] 
Applying (2) in Lemma \ref{lem:groupoid} it implies that for $ k \geqslant 2 $,  
\begin{equation}\label{equ:GrpE6}
	\Grp^k(\tilde{E}_6) = \Grp^\infty(\tilde{E}_6) .
\end{equation}
  
In order to prove Theorem \ref{thmA}, we need to study the first Yau Algebra of the $ \tilde{E}_6 $-family, defined as 
 \[  L^1(\tilde{E}_6; t) := \Der ( \mathcal{A}^1(\tilde{E}_6; t), \mathcal{A}^1(\tilde{E}_6; t)) . \]
 By direct calculation, we find that for the case $t \in \mathbb{C}(\tilde{E}_6)\setminus \{ 0, 6, 6 \rho , 6 \rho^2 \} $, the algebra
 $  L^1(\tilde{E}_6; t) $ is $22$-dimensional, with a $ \mathbb{C}$-linear basis represented as 
\[  \{x \partial_x + y \partial_y + z \partial_z \} \cup m^2 \mathcal{A}^1 (\mathcal{V}_t ) \langle \partial_{x}, \partial_{y}, \partial_{z} \rangle . \]
  One may choose the basis of $ \mathcal{A}^1(\tilde{E}_6; t) $:
  \[  \{ 1, x, y, z,  x^2, y^2, z^2, x y, xz, yz, xyz \}  
  \]  
with multiplication rules:
\begin{align*}
x^3 &= y^3 = z^3 = - \frac{t}{3} xyz, \\
x^2 y & = x^2 z =y^2 x = y^2 z = z^2x = z^2 y = 0.
\end{align*}
Then the set 
  \begin{align*}
  	\{ x \partial_x + y \partial_y + z \partial_z \} \cup \{ x^2 \partial_{i}, y^2 \partial_{i}, z^2 \partial_{i}, x y \partial_{i}, xz \partial_{i}, yz \partial_{i}, xyz \partial_{i} \},
  \end{align*}
  where $ \partial_{i} = \partial_{x}, \partial_{y}, \partial_{z} $, forms a basis of $ L^1(\tilde{E}_6; t) $ (see Section \ref{sec:representation} for another complete list of bases).
  In the case of jump points, i.e., $ t \in \{ 0, 6, 6 \rho , 6 \rho^2 \} $, the dimension of $  L^1(\tilde{E}_6; t) $ equals $24$.

  Through direct computation, we find that each matrix $ A_i $ in Theorem \ref{thmA} is a morphism in $ \Grp^k(\tilde{E}_6 ) $ for $ k \geqslant 0 $ and induces an isomorphism of the first Yau algebra. Based on this observation, the assertions (1) and (2) of Theorem \ref{thmA} can be divided into the three assertions of the following Lemma.
\begin{lem}\label{lem:assertion}
\begin{enumerate}
	\item  The action groupoid $ \AcGrp(\{ 0, 6, 6\rho, 6\rho^2 \} \, |\, G ) $ is a sub-groupoid of $ \Grp^k(\tilde{E}_6) $. In particular, $ \AcGrp(\{ 0, 6, 6\rho, 6\rho^2 \} \, |\, G ) $ becomes a fully faithful sub-groupoid of $ \Grp^k(\tilde{E}_6) $ when $ k \geqslant 2 $. 

	\item When $t ,s \in \mathbb{C}(\tilde{E}_6)\setminus \{ 0, 6, 6 \rho , 6 \rho^2 \} $, the equality 
	\[ \Grp^k(\tilde{E}_6)(t,s)=\Grp^\infty(\tilde{E}_6)(t,s)
	\] holds for $ k \geqslant 0 $.

	\item When $ t ,s\in \mathbb{C}(\tilde{E}_6) \setminus \{ 0, 6,6\rho,6\rho^2 \} $, each morphism in $\Grp^1(\tilde{E}_6)(t,s)$ is generated by $ A_1,\ldots, A_4  $.
\end{enumerate}
\end{lem}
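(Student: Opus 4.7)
The plan is to treat the three assertions in the order (1), (3), (2): a direct substitution handles (1), the main computation lies in (3), and (2) then follows formally.

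For Assertion (1), I expand each generator acting on $f_t$. The permutation matrices $A_3$ and $A_4$ visibly preserve $f_t$ and fix $t$, while the diagonal matrix $A_1$ sends $f_t$ to $f_{\rho t}$. For $A_2$, expanding $f_t \circ A_2$ and collecting terms using $1 + \rho + \rho^2 = 0$ shows the result is a scalar multiple of $f_{R(t)}$ with $R(t) = (18 - 3t)/(3 + t)$. Thus each $A_i$ preserves the principal ideal generated by the appropriate $f_{\pi(A_i)(t)}$, lies in $\Grp^\infty(\tilde{E}_6)$, and so belongs to $\Grp^k(\tilde{E}_6)$ for all $k\geqslant 0$ by Lemma~\ref{lem:groupoid}(1), in particular at the jump points. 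The fully faithful claim for $k \geqslant 2$ reduces by \eqref{equ:GrpE6} to showing that the stabilizer of $f_t$ in $\PGL_3(\mathbb{C})$ at each jump point coincides with the stabilizer of $t$ inside $G$; at $t=0$ this is the classical statement that the automorphism group of the Fermat cubic is the Hessian group of order $216 = |G|$, and the remaining three jump points follow by the same type of direct check.

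For Assertion (3), fix $t, s \in \mathbb{C}(\tilde{E}_6) \setminus \{0, 6, 6\rho, 6\rho^2\}$ and let $\phi \in \Grp^1(\tilde{E}_6)(t,s)$ be represented by a matrix $M = (a_{ij}) \in \PGL_3(\mathbb{C})$ as in \eqref{equ:phi}. The idea is to exploit the graded $\mathcal{A}^1(\tilde{E}_6;t)$-module structure of the first Yau algebra, not the Lie bracket alone. Push $\phi$ forward via \eqref{equ:partial} to a $\phi$-equivalent module isomorphism $\phi_*: L^1(\tilde{E}_6; t) \to L^1(\tilde{E}_6; s)$. At non-jump parameters the basis of $L^1(\tilde{E}_6;t)$ displayed before the Lemma contains a unique element outside $m^2 \mathcal{A}^1(\tilde{E}_6; t)\langle \partial_x, \partial_y, \partial_z \rangle$, namely the Euler derivation $\E = x\partial_x + y\partial_y + z\partial_z$, so $\phi_*(\E) = \mu \E'$ for some $\mu \in \mathbb{C}^*$. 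Combining this with the module compatibility $\phi_*(g\cdot \E) = \phi(g)\phi_*(\E)$ applied to weight-$1$ generators, together with the ideal relations $x^2y \equiv x^2z \equiv \cdots \equiv 0$ and $3x^3 + t\,xyz \equiv 3y^3 + t\,xyz \equiv 3z^3 + t\,xyz \equiv 0$, produces a tractable system of polynomial equations in the nine entries $a_{ij}$ and an explicit relation between $s$ and $t$ depending on the support pattern of the rows of $M$. A finite case analysis on that pattern then shows that either $M$ is monomial and lies in the subgroup generated by $A_1, A_3, A_4$, or each row has all three entries nonzero and forms a scaled permutation of $(\rho, \rho^2, 1)$, placing $M$ in the $A_2$-coset of that subgroup. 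In every case $M \in G$.

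Assertion (2) is then formal: Lemma~\ref{lem:groupoid}(1) gives $\Grp^\infty(\tilde{E}_6) \subseteq \Grp^k(\tilde{E}_6)$ for every $k$, and for $k \geqslant 2$ the reverse inclusion is \eqref{equ:GrpE6}; for $k = 0, 1$ at non-jump points, Lemma~\ref{lem:groupoid}(3) yields $\Grp^0(\tilde{E}_6)(t,s) \subseteq \Grp^1(\tilde{E}_6)(t,s)$, while Assertions (3) and (1) give $\Grp^1(\tilde{E}_6)(t,s) \subseteq G \subseteq \Grp^\infty(\tilde{E}_6)(t,s)$. The main obstacle will be Assertion (3): Theorem~\ref{thmA}(3) says that the Lie-algebra structure of $L^1(\tilde{E}_6; t)$ is independent of $t$, so no Torelli-type argument from the bracket is available; the delicate point is to organise the constraints coming from the $\mathcal{A}^1$-module action on $\E$ and the vanishing weight-$3$ monomials so that the polynomial system on the nine entries $a_{ij}$ reduces to a small finite set of cases identifiable with elements of $G$.
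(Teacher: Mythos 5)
Your overall architecture is legitimate: proving (3) first and then deducing (2) from the sandwich $\Grp^\infty\subseteq\Grp^0\subseteq\Grp^1\subseteq G\subseteq\Grp^\infty$ is a valid reorganization (the paper instead proves (2) independently of (3), via a transpose trick explained below). But there is a genuine gap at the heart of your plan: assertion (3) is exactly where all the difficulty lives, and your proposal does not prove it. You say that the module compatibility $\phi_*(g\,\E)=\phi(g)\phi_*(\E)$ together with the ideal relations ``produces a tractable system'' and that ``a finite case analysis then shows'' the entries form either a monomial matrix or a scaled permutation of $(\rho,\rho^2,1)$. That conclusion is the theorem, not a step. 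The module identity on weight-one multiples of the Euler field is automatic and yields no constraint by itself; the paper has to extract invariants from the \emph{interaction} of the module structure with the Lie bracket. Concretely, it computes $[\mathcal{H}_\w,L^1]_t$ and $[\w^2\partial,\mathcal{H}_\w]_t$ to show that the ratio $\F_\phi=\F_{-18/s}^{(x',y',z')}(\phi(\w))/\F_{-18/t}^{(x,y,z)}(\w)$ is independent of $\w$ (Lemma \ref{lem:transpose}), which simultaneously proves that the transpose of $\phi$ lies in $\Grp^\infty(\tilde{E}_6)(-18/s,-18/t)$ — this is how the paper gets (2) without (3) — and, applied to a second decomposition $D_t=D_t'(\w)\oplus D_t''(\w)$, yields closed formulas for $a_1',b_1',c_1'$ in terms of $a_1,a_2,a_3$. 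Even then, forcing $a_1^3=a_2^3=a_3^3$ in the all-nonzero case requires combining those formulas with their inverses to show the $a_i^3$ are roots of a single function $G(X)$ with at most two roots, and then a twist-by-$\rho$ argument (replacing $\phi$ by $\phi\cdot\mathrm{diag}(1,\rho,1)$) to exclude the possibility that only two of the $a_i^3$ coincide. None of these ideas appear in your sketch, and without them the ``polynomial system on the nine entries'' does not visibly close up.

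Two smaller points. First, in assertion (1) your justification of full faithfulness at $t=0$ is wrong as stated: the linear stabilizer of the Fermat cubic $x^3+y^3+z^3$ in $\PGL_3(\mathbb{C})$ has order $54$ (diagonal cube-root-of-unity matrices and permutations), not $216$; the Hessian group of order $216$ preserves the whole pencil $x^3+y^3+z^3+txyz$, not the single fiber $t=0$. Since $\AcGrp(\{0,6,6\rho,6\rho^2\}\mid G)(0,0)=\pi^{-1}(\mathrm{Stab}_H(0))$ also has order $54$, the stabilizers do coincide, but your count of $216$ would actually contradict the full faithfulness you are trying to establish. Second, in deducing (2) from (3) you should note why an element of $G$ lying in $\Grp^1(t,s)$ necessarily satisfies $s=\pi(g)(t)$ (this follows since $f_{t'}\equiv(t'-s)\,xyz$ modulo $I_1(s)$ and $xyz\notin I_1(s)$), so that it really is a morphism of $\Grp^\infty(t,s)$; this is easy but currently implicit.
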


\begin{proof} [Proof of the first assertion]
	It is straightforward to verify that the group $G$ induces morphisms of $ \Grp^k(\tilde{E}_6)$, and that the objects $ 0, 6, 6\rho, 6\rho^2 $ are stable under $ G $.

	For $ k\geqslant 2 $, the morphisms in $\Grp^{k}(\tilde{E}_6)(0,0) $ preserve the cubic polynomial  
	\[
		f_0 = x^3 + y^3 + z^3. 
	\]
	Therefore, $\Grp^{k}(\tilde{E}_6)(0,0) $ is represented by scaling matrices (equivalent to $\Id$ in $ \PGL_3(\mathbb{C})$) and permutations of three letters. By composing with  $ A_1^i A_2 $, it becomes apparent that  $\Grp^{k}(\tilde{E}_6)(t,s) $ for $ t ,s \in \{ 0, 6, 6 \rho , 6 \rho^2 \} $ is generated by $ A_1,\ldots,A_4$.
  \end{proof}

  \subsection{The second assertion}
  \begin{notation}\label{Nt:UV}
	\begin{enumerate}
	\item For an element $ \w $  in $ \mathcal{A}^1(\tilde{E}_6 ; t) $ of weight one, we introduce the symbol
	\[
		\F_{t}^{(x,y,z)}(\w) = f_t(\alpha, \beta ,\gamma) 
	\]
	where $ \alpha, \beta, \gamma $ are coefficients satisfying $ \w = \alpha x + \beta y + \gamma z $. Additionally, we will utilize the notation $ \F_{s}^{(x',y',z')}(\w)$ in reference to alternative coordinates $x',y',z'$.

	\item For linear subspace $ U, V \subseteq L^1(\tilde{E}_6; t) $, we denote by $ [U,V ]_t $ the vector space spanned by all elements $[u,v]_t$ where $u \in U $, and $v \in V$. For a vector $v$, we also define $[v,U]_t$ in the obvious manner. 
	
	\end{enumerate}
  \end{notation}      

  To prove the second assertion of Lemma \ref{lem:assertion}, we need the following technique lemma characterizing the relation between $ \Grp^1(\tilde{E}_6) $ and $ \Grp^\infty(\tilde{E}_6) $.
  \begin{lem}\label{lem:transpose}
	Assume that   $\phi$ is contained in $ \Grp^1(\tilde{E}_6)(t,s) $ with $ t,s \in \mathbb{C}(\tilde{E}_6)\setminus \{ 0, 6 ,6 \rho , 6 \rho^2 \} $ and $ \w $ is an element of weight one in $\mathcal{A}(\tilde{E}_6;t)$.
	\begin{enumerate}
		\item  The quantity
		\[
			\F_{\phi}:= \frac{  \F_{\frac{-18}{s}}^{(x',y',z')}( \phi(\w))   }{ \F_{\frac{-18}{t}}^{(x,y,z)}(\w)   }
		\]
		is independent on the choice of $ \w $ (but depends on $t,s$).
		\item  The transpose of the matrix $\phi$ is contained in $ \Grp^{\infty}(\tilde{E}_6)(\frac{-18}{s}, \frac{-18}{t}) $. 
	\end{enumerate}
  \end{lem}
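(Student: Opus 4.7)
The plan is to use the multiplication table of $\mathcal{A}^1(\tilde{E}_6;t)$ listed just above the lemma to realize the cubic form $f_{-18/t}$ intrinsically from the cubing map on the weight-one part, and then to transport it through $\phi$.

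First I would carry out the key computation in $\mathcal{A}^1(\tilde{E}_6;t)$. For $\w=\alpha x+\beta y+\gamma z$, I expand $\w^3$ using $x^3=y^3=z^3=-\frac{t}{3}xyz$ together with the vanishing of $x^2y,\,x^2z,\,y^2x,\,y^2z,\,z^2x,\,z^2y$: every mixed monomial of the form $\alpha^2\beta\cdot x^2y$ drops out, the cubes collapse to $-\frac{t}{3}(\alpha^3+\beta^3+\gamma^3)\,xyz$, and the trinomial expansion contributes $6\alpha\beta\gamma\cdot xyz$, so that
\[
\w^3 \;=\; -\frac{t}{3}\bigl(\alpha^3+\beta^3+\gamma^3-\tfrac{18}{t}\alpha\beta\gamma\bigr)\,xyz \;=\; -\frac{t}{3}\,\F_{\frac{-18}{t}}^{(x,y,z)}(\w)\cdot xyz.
\]
The analogous identity holds in $\mathcal{A}^1(\tilde{E}_6;s)$. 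In this form $f_{-18/t}$ appears as an intrinsic invariant of $\mathcal{A}^1(\tilde{E}_6;t)$: up to the nonzero factor $-t/3$, it is the ratio of $\w\mapsto\w^3$ to the canonical generator $xyz$ of the one-dimensional weight-three graded piece.

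For assertion (1), I apply $\phi$ to both sides. Since $\phi$ preserves weights and is an isomorphism, and the weight-three part of $\mathcal{A}^1(\tilde{E}_6;s)$ is spanned by $x'y'z'$, we may write $\phi(xyz)=c_\phi\cdot x'y'z'$ for a unique $c_\phi\in\mathbb{C}^*$. Comparing $\phi$ of the displayed identity with the same identity applied to $\phi(\w)\in\mathcal{A}^1(\tilde{E}_6;s)$ gives $t\,c_\phi\cdot\F^{(x,y,z)}_{\frac{-18}{t}}(\w)=s\cdot\F^{(x',y',z')}_{\frac{-18}{s}}(\phi(\w))$, so $\F_\phi=tc_\phi/s$ is manifestly independent of $\w$. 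The hypothesis $t,s\notin\{0,6,6\rho,6\rho^2\}$ is used here: it ensures both that $-18/t,-18/s\in\mathbb{C}(\tilde{E}_6)$ and that the multiplication table of $\mathcal{A}^1$ invoked above really holds, since at the jump points the algebra is strictly larger and the vanishing relations $x^2y=0$, etc., fail.

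For assertion (2), I read the equality $\F^{(x',y',z')}_{\frac{-18}{s}}(\phi(\w))=(tc_\phi/s)\,\F^{(x,y,z)}_{\frac{-18}{t}}(\w)$ as a polynomial identity. Writing $\phi$ via the row convention of \eqref{equ:phi}, the coefficient vector of $\phi(\w)$ is $A^{T}(\alpha,\beta,\gamma)^{T}$, so the equality becomes
\[
f_{-18/s}\bigl(A^{T}\vec v\bigr) \;=\; \tfrac{tc_\phi}{s}\cdot f_{-18/t}(\vec v) \qquad (\vec v\in\mathbb{C}^{3}).
\]
In the convention of \eqref{equ:phi} this is precisely the assertion that the invertible matrix $A^{T}$ represents an isomorphism of coordinate rings in $\Grp^{\infty}(\tilde{E}_6)(-18/s,-18/t)$. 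The main subtlety will be the bookkeeping between the paper's row convention for $\phi$ and the column-vector action on weight-one parts, which is what introduces the transpose; once that convention is pinned down, both parts of the lemma drop out of the single cubic identity displayed above.
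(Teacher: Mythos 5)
Your proof is correct, and for part (1) it takes a genuinely different route from the paper. The paper derives the key identity by working inside the first Yau algebra $L^1(\tilde{E}_6;t)$: it forms $\mathcal{H}_\w = \w(x\partial_x+y\partial_y+z\partial_z)$, extracts the subspace $D_t''(\w)$ from $[\mathcal{H}_\w, L^1]_t$, and computes the bracket $[\w^2\partial,\mathcal{H}_\w]_t = \tfrac{t}{3}\F^{(x,y,z)}_{-18/t}(\w)\cdot xyz\,\partial$, then uses $\phi$-equivariance of $\phi_*$ to reach the same conclusion $\phi(xyz)=\tfrac{s\,\F_{-18/s}(\phi(\w))}{t\,\F_{-18/t}(\w)}\,x'y'z'$ that you obtain. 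You bypass the Lie algebra entirely: the identity $\w^3=-\tfrac{t}{3}\F^{(x,y,z)}_{-18/t}(\w)\cdot xyz$ in the commutative algebra $\mathcal{A}^1(\tilde{E}_6;t)$ (your coefficient check is right, since $-\tfrac{t}{3}\cdot(-\tfrac{18}{t})=6$ recovers the trinomial term $6\alpha\beta\gamma$), together with the one-dimensionality of the weight-three graded piece, gives $\F_\phi = tc_\phi/s$ directly. This is shorter and more elementary; the only thing the paper's heavier route buys is that the objects it introduces ($\mathcal{H}_\w$, $D_t'(\w)$, $D_t''(\w)$) are reused verbatim in the subsequent lemmas computing $a_1',b_1',c_1'$ and proving the third assertion, so the bracket computation is not wasted there. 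Your part (2) is essentially identical to the paper's: both read the resulting polynomial identity $f_{-18/s}(A^T\vec v)=\F_\phi f_{-18/t}(\vec v)$ as exhibiting $A^T$ as a contact equivalence, hence a morphism in $\Grp^\infty(\tilde{E}_6)(-\tfrac{18}{s},-\tfrac{18}{t})$, with the transpose arising exactly from the row-versus-column bookkeeping you describe.
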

	\begin{proof}
		(1)	Denote by $ [-,-]_t$ the Lie bracket of $ L^1(\tilde{E}_6;t )$. 
		Let $D_t$ be the $\mathbb{C}$-linear space with linear basis $ \partial_x, \partial_y, \partial_z $. 

		For an element $ \w = \alpha x + \beta y + \gamma z  $  in $ \mathcal{A}^1(\tilde{E}_6 ; t) $ of weight one, we define $ \mathcal{H}_ \w : = \w  ( x \partial _x +y  \partial_y + z \partial_z ) $.   Then 
		\[       [  \mathcal{H}_\w ,   L^1(\tilde{E}_6 ; t)  ]_t = \langle xyz (-\beta \partial_{x} + \alpha \partial_{y} ),  xyz(-\gamma \partial_{x} + \alpha \partial_{z}) \rangle. \]
		We obtain a linear subspace of $ D_t $ parameterized by $ \w $:
		\[  D_t''(\w) := \langle -\beta \partial_{x} + \alpha \partial_{y}, -\gamma \partial_{x} + \alpha \partial_{z}\rangle . \] 
		For arbitrary $ \partial = \xi_1 (-\beta \partial_{x} + \alpha \partial_{y} ) + \xi_2(-\gamma \partial_{x} + \alpha \partial_{z})  \in D_t''(\w) $ with $ \xi_1, \xi_2 \in \mathbb{C} $, we have
		\begin{align}
			&[ \w^{ 2} \partial ,   \mathcal{H}_ \w ]_t  \nonumber \\
			& = [ \w^{ 2} \xi_1 (-\beta \partial_{x} + \alpha \partial_{y} )+\w^{ 2} \xi_2 (-\gamma \partial_{x} + \alpha \partial_{z})  , \w (x\partial_x + y \partial_y + z \partial _z ) ]_t  \nonumber  \\ 
			&=\frac {t}{3} \F_{\frac{-18}{t}}(\w)  \cdot xyz \partial  . \label{eq:w^2}
		\end{align} 
		Assume that $ \phi $ is represented by the matrix \eqref{equ:phi} with inverse \eqref{equ:inverse}.
		Let us investigate the images of quantities $ \w $, $ \mathcal{H}_\w$, $ D_t''(\w) $ and $ [ \w^{ 2} \partial ,   \mathcal{H}_\w ]_t $   
		 under the isomorphism $ \phi $.
\begin{itemize}
	\item For the image of $ \w $, we obtain
\begin{equation} \label{equ:phiw}
	  \phi (\w) = (\alpha a_1+ \beta b_1 +\gamma c_ 1  )x'+ (\alpha a_2+ \beta b_2+\gamma c_ 2 ) y' +  (\alpha a_3+ \beta b _3+\gamma c_3 )z '   . 
\end{equation}
 
\item For $ \mathcal{H}_\w$, we have 
		\begin{equation*} 
			   \phi_* (\mathcal{H}_ \w)  =  \phi (\w)    ( x' \partial _{x'} +y'  \partial_{y'} + z'\partial_{z'}) .
		\end{equation*}
\item For $D_t''(\w)$, we compute the Lie bracket of $ \phi_* (\mathcal{H}_ \w) $ and $ L_s^1 $ to obtain
\begin{equation}\label{equ:Dtw}
\begin{aligned}
	\phi_* (D_t''(\w))   = \langle &  -(\alpha a_2+ \beta b_2+\gamma c_ 2 ) \partial_{x'} + (\alpha a_1+ \beta b_1 +\gamma c_ 1  ) \partial_{y'} ,  \\
	&- (\alpha a_3+ \beta b _3+\gamma c_3 ) \partial_{x'} + (\alpha a_1+ \beta b_1 +\gamma c_ 1  ) \partial_{z'} \rangle  .
\end{aligned}
\end{equation}
\item If $ \partial_*= \phi_*(\partial)   \in \phi_*( D_t''(\w) ) $, then 
\begin{equation}\label{eq:phi_w^2}
	[ \phi_*(\w^{ 2} \partial ), \phi_*( \mathcal{H}_ \w) ]_s =[ \phi (\w^{ 2}) \partial_* , \phi_*( \mathcal{H}_\w) ]_s = \frac {s }{3} \F_{\frac{-s}{18}}^{(x',y',z')}(\phi(\w)) x'y'z' \partial_*.
\end{equation}  
\end{itemize}

Note that $ \phi_* $ preserves the Lie brackets.
Combining Equations (\ref{eq:w^2}) and (\ref{eq:phi_w^2}), we find
		\begin{align*}
			\frac {t}{3} \F_{\frac{-18}{t}}^{(x,y,z)}(\w)  \cdot \phi_*( xyz )\partial_* &= \frac {t}{3} \F_{\frac{-18}{t}}^{(x,y,z)}(\w)  \cdot \phi_*( xyz \partial ) \\
			& = \phi_* [\w^{ 2} \partial ,   \mathcal{H}_ w ]_t \\
			& =  [\phi_*  (\w^{ 2} \partial) ,   \phi_*( \mathcal{H}_\w )]_s \\
			& = \frac {s }{3} \F_{\frac{-s}{18}}^{(x',y',z')}(\phi(\w))  x'y'z' \partial_*.
		\end{align*}
		Thus, for any element $ \w $ in $ \mathcal{A}^1(\tilde{E}_6 ; t)$ of weight one, we obtain the equality 
		\begin{equation}\label{eq:phi_xyz2}
			\phi (xyz) =  \frac{ s \F_{\frac{-18}{s}}^{(x',y',z')}( \phi(\w))   }{t \F_{\frac{-18}{t}}^{(x,y,z)}(\w)   } \cdot x'y'z' .
		\end{equation} 
		So the factor 
		\[
			\frac{ s \F_{\frac{-18}{s}}^{(x',y',z')}( \phi(\w))   }{t \F_{\frac{-18}{t}}^{(x,y,z)}(\w)   }
		\]
		is invariant regardless of the selection of $\w$. Consequently, the first assertion is validated.
		
		(2) From the first assertion, any element $\w$ in $ \mathcal{A}^1(\tilde{E}_6 ; t) $ of weight one satisfies 
		\begin{equation} \label{phi_xs}
			   \F_{\frac{-18}{s}}^{(x',y',z')}( \phi(\w))  = \F_{\phi} \cdot \F_{\frac{-18}{t}}^{(x,y,z)}(\w) 
		\end{equation}
		for some constant $ \F_\phi $. 
	From the definition
	$ \F_{\frac{-18}{t}}^{(x,y,z)}(\w)=f_{\frac{-18}{t}} (\alpha ,\beta, \gamma ) $
	and the explicit expansion of the term
	$ \F_{\frac{-18}{s}}^{(x',y',z')}( \phi(\w)) $,
	 we see that the transformation
	 $ \w \mapsto \phi(\w) $ represents a morphism in $ \Grp^{\infty}(\tilde{E}_6)(\frac{-18}{s}, \frac{-18}{t}) $. The equality \eqref{equ:phiw} can be rewritten as 
	 \[ \phi(\w) =\alpha' x'+ \beta' y' +  \gamma' z'
	 \]
	 where 
	 \[ 
		\begin{pmatrix}
			\alpha' \\
			\beta' \\
			\gamma'  
		\end{pmatrix}
		= \begin{pmatrix}
			a_1& b_1& c_1 \\
			a_2& b_2& c_2 \\
			a_3& b_3& c_3  
		\end{pmatrix}
		\begin{pmatrix}
			\alpha \\
			\beta \\
			\gamma  
		\end{pmatrix}.
	 \]
	It means that $ \w \mapsto \phi(\w) $ is represented as the transpose matrix 
	of $  \phi $, and thus the lemma holds.
	\end{proof}

	\begin{proof}[Proof of the second assertion of Lemma \ref{lem:assertion}]
		
	We have known from \eqref{equ:GrpE6} that the second assertion holds for $ k \geqslant 2 $. It suffices to consider the cases $ k = 0 $ and $ k = 1 $.
  	
	We have already shown in (1) of Lemma \ref{lem:groupoid} that  
  \[
	  \Grp^{\infty}(\tilde{E}_6) (t,s) \subseteq \Grp^{0}(\tilde{E}_6) (t,s).
  \]
  It follows from (3) of Lemma \ref{lem:groupoid} that 
  \[
	\Grp^{0}(\tilde{E}_6) (t,s) \subseteq \Grp^{1}(\tilde{E}_6) (t,s).
  \]
  For a subset $ S \subseteq \PGL_3(\mathbb{C}) $, we denote all the transposes of matrices in $ S $ by $ S^{T} $.
 We obtain from Lemma \ref{lem:transpose} that 
 \begin{equation*} 
	\Grp^1(\tilde{E}_6)(t,s)  ^{T} \subseteq \Grp^\infty(\tilde{E}_6)(-\frac{18}{s}, -\frac{18}{t}) .
 \end{equation*} 
	Note that this implies
	\begin{align*}
		\Grp^1(\tilde{E}_6)(t,s) &  \subseteq \Grp^\infty(\tilde{E}_6)(-\frac{18}{s}, -\frac{18}{t}) ^T  \\
		& \subseteq  \Grp^1(\tilde{E}_6)(-\frac{18}{s}, -\frac{18}{t}) ^T \\
		&  \subseteq \Grp^\infty(\tilde{E}_6)(t,s) .
	\end{align*}
	 Finally, we conclude that
	\[ 	  \Grp^0(\tilde{E}_6)(t,s) =  \Grp^1(\tilde{E}_6)(t,s) =\Grp^\infty(\tilde{E}_6)(t,s) . \]
	\end{proof}

   \subsection{The third assertion}
	\begin{lem}
		For $ t,s \in \mathbb{C}(\tilde{E}_6) \setminus \{ 0, 6, 6\rho , 6 \rho^2 \} $, we assume that the morphism $ \phi $ in $ \Grp^1(\tilde{E}_6)(t,s) $ is presented as the matrix \eqref{equ:phi} with inverse \eqref{equ:inverse}.  Then
		\begin{subequations}
			\begin{equation}\label{eq:a1p}
				a_1 ' = ( a_1^2 - \frac{6}{s}a_2 a_3)/ f_{\frac{-18}{s}}(a_1,a_2,a_3),
			\end{equation}
			\begin{equation}\label{eq:b1p}
				b_1 ' = ( a_2^2 - \frac{6}{s}a_1 a_3)/ f_{\frac{-18}{s}}(a_1,a_2,a_3) ,
			\end{equation}
			\begin{equation}\label{eq:c1p}
				c_1 ' = ( a_3^2 - \frac{6}{s}a_1 a_2)/ f_{\frac{-18}{s}}(a_1,a_2,a_3)  .
			\end{equation}
		\end{subequations}
	\end{lem}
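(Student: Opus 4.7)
The plan is to deduce all three formulas simultaneously from a single polynomial identity furnished by Lemma \ref{lem:transpose}. Part (2) of that lemma asserts that the transpose of the matrix $A$ representing $\phi$ defines a morphism in $\Grp^\infty(\tilde{E}_6)(\frac{-18}{s}, \frac{-18}{t})$, which translates into the polynomial identity
\[
f_{\frac{-18}{s}}(a_1 X + b_1 Y + c_1 Z,\; a_2 X + b_2 Y + c_2 Z,\; a_3 X + b_3 Y + c_3 Z) = \F_\phi \cdot f_{\frac{-18}{t}}(X, Y, Z),
\]
where part (1) of the same lemma applied with $\w = x$ identifies the scaling constant as $\F_\phi = f_{\frac{-18}{s}}(a_1, a_2, a_3)$.

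Writing $V_i := \partial_i f_{\frac{-18}{s}}|_{(a_1, a_2, a_3)}$ for the three gradient components at $(a_1, a_2, a_3)$, the central step is to differentiate the identity above once with respect to each of $X$, $Y$, and $Z$ and then specialize to $(X, Y, Z) = (1, 0, 0)$. By the chain rule the three left-hand sides collapse to $a_1 V_1 + a_2 V_2 + a_3 V_3$, $b_1 V_1 + b_2 V_2 + b_3 V_3$, and $c_1 V_1 + c_2 V_2 + c_3 V_3$ respectively, while on the right only the $X$-derivative survives, because $\partial_Y f_{\frac{-18}{t}}$ and $\partial_Z f_{\frac{-18}{t}}$ both vanish at $(1, 0, 0)$. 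This produces the linear system
\[
a_1 V_1 + a_2 V_2 + a_3 V_3 = 3 \F_\phi, \qquad b_1 V_1 + b_2 V_2 + b_3 V_3 = 0, \qquad c_1 V_1 + c_2 V_2 + c_3 V_3 = 0,
\]
whose first equation is also Euler's identity applied to the homogeneous cubic $f_{\frac{-18}{s}}$.

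Rewriting this system as $A \cdot (V_1, V_2, V_3)^T = (3 \F_\phi, 0, 0)^T$ and multiplying through by $A^{-1}$, whose first column equals $(a_1', b_1', c_1')^T$ by definition, yields $(V_1, V_2, V_3)^T = 3 \F_\phi \cdot (a_1', b_1', c_1')^T$. Substituting the explicit partial derivatives $V_1 = 3 a_1^2 - \frac{18}{s} a_2 a_3$, $V_2 = 3 a_2^2 - \frac{18}{s} a_1 a_3$, $V_3 = 3 a_3^2 - \frac{18}{s} a_1 a_2$ and cancelling the common factor $3$ recovers precisely the three formulas \eqref{eq:a1p}, \eqref{eq:b1p}, and \eqref{eq:c1p}.

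The only point requiring care is the nonvanishing of $\F_\phi$, so that division by $f_{\frac{-18}{s}}(a_1, a_2, a_3)$ is legitimate. This follows from the isolated-singularity hypothesis: the gradient of the weighted-homogeneous cubic $f_{\frac{-18}{s}}$ vanishes only at the origin of $\mathbb{C}^3$, and since $A$ is invertible its first row $(a_1, a_2, a_3)$ is nonzero, so the gradient vector $V$ is nonzero; the matrix equation $AV = (3\F_\phi, 0, 0)^T$ then forces $\F_\phi \neq 0$.
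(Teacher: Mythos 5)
Your proof is correct, and it takes a genuinely different route from the paper's. The paper works entirely inside the Lie algebra $L^1(\tilde{E}_6;t)$: for $\w = x$ it builds the invariant subspaces $N(\w)$, $V(\w)$, $D_t'(\w)$ and $D_t''(\w)$, obtains the decomposition $D_t = D_t'(\w)\oplus D_t''(\w)$ of the span of the $\partial_i$, and then pins down $\phi_*(\partial_x)$ by decomposing the Euler derivation $x\partial_x+y\partial_y+z\partial_z$ relative to $\w$ and comparing with its image; the formulas for $a_1',b_1',c_1'$ are read off from $\phi_*(\partial_x)=a_1'\partial_{x'}+b_1'\partial_{y'}+c_1'\partial_{z'}$. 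You instead take the conclusion of Lemma \ref{lem:transpose} as the starting point: the identity $f_{-18/s}(A^T(X,Y,Z)^T)=\F_\phi\, f_{-18/t}(X,Y,Z)$ holds as an equality of cubic polynomials (since \eqref{phi_xs} holds for every $\w$ of weight one), so differentiating and evaluating at $(1,0,0)$ gives $A\,(V_1,V_2,V_3)^T=(3\F_\phi,0,0)^T$, and inverting $A$ yields all three formulas at once. Your identification $\F_\phi=f_{-18/s}(a_1,a_2,a_3)$ via $\w=x$, the Euler-identity consistency check, and the nonvanishing argument (the hypothesis $s\notin\{0,6,6\rho,6\rho^2\}$ is exactly what makes $f_{-18/s}$ have an isolated singularity, so the gradient $V$ is nonzero at the nonzero point $(a_1,a_2,a_3)$, whence $\F_\phi\neq 0$) are all sound. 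What your approach buys is economy and symmetry -- a single linear-algebra computation replaces the case-by-case construction of invariant subspaces, and it makes explicit that the lemma is really just the cofactor formula for $A^{-1}$ relative to the preserved cubic form; what the paper's approach buys is that it re-derives the needed data intrinsically from the Lie structure, which is the theme the authors are emphasizing throughout that section. Both arguments ultimately rest on Lemma \ref{lem:transpose}, which precedes this lemma in the paper, so your reliance on it is legitimate.
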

	\begin{proof} 
		We maintain the notations of Lemma \ref{lem:transpose}.  
	  If we select $ \w = x \in \mathcal{A}^1(\tilde{E}_6; t)$, then  $ \mathcal{H}_ \w := \w (x\partial_x+y \partial_y + z \partial_z)  $.
		Define $ N(\w):= \langle \w^2 \partial_x, \w^2 \partial_y, \w^2 \partial_z \rangle $ and 
		$ Z = \langle\mathcal{H}_x, \mathcal{H}_y, \mathcal{H}_z \rangle $. Then,
		\[   V(\w): = \{u \in N(\w): [u, Z]_t =0\} = \langle \w^2 \partial_x\rangle .  \]
		This implies that we identify a $1$-dimensional subspace $ D_t'(\w) := \langle \partial_x \rangle$ parameterized by $\w$.
	By computing the Lie bracket of $ \mathcal{H}_\w $ and $L^1(\tilde{E}_6;t) $, we obtain the subspace 
		\[  D_t''(\w) = \langle   \partial_y,   \partial_z\rangle .\]
		Therefore, $ \w $ leads to a decomposition of $ D_t $ as follows:
		\[
		D_t =  D_t'(\w) \oplus D_t'' (\w)= \langle \partial_x \rangle \oplus \langle \partial_y, \partial_z  \rangle. 
		\]
		In other words, we generate the vector space  $ \langle \partial_x \rangle $  through the influence of $ \w $.
		Thus, we achieve a decomposition of  $ x\partial_x+y \partial_y + z \partial_z $ relative to $ \w $, given by  
		\begin{equation} \label{equ:Eular}
			  x\partial_x+y \partial_y + z \partial_z  = \w \partial_x  + ( y \partial_y +  z \partial_z) . 
		\end{equation}
		It yields that $ \partial_x $ (not just the linear space $ \langle \partial_x \rangle$) is determined by $ \w $.

		We turn to study the corresponding decomposition of $D_t$ and image of $ \partial_x $ under $  \phi _* $.
		By definition, we have 
		\[  \phi(\w) = a_1 x' + a_2 y' + a_3 z'. \]
		Then the image of $\mathcal{H}_\w $ equals
		\[  \phi_*(\mathcal{H}_\w)   = (a_1 x' + a_2 y' + a_3 z')({x'}\partial_{x'}+{y'} \partial_{y'}+ z' \partial_{z'}) .\] 
		Evidently,
		\[ \phi_* N(\w) =\langle \phi(\w)^2 \partial_{x'} , \phi(\w)^2 \partial_{y'} , \phi(\w)^2 \partial_{z'}  \rangle. \]
		Since $ \phi_* Z = Z$, it follows that
	  \begin{align*}
	  \phi_* V(\w) & = \{ u  \in \phi_* N(\w)  : [u, Z]_s = 0 \}\\
	  & = \langle  \phi(\w)^{2} \left( ( s  a_1^2 - 6 a_2 a_3) \partial_{x'} +(s  a_2^2 - 6 a_1 a_3) \partial_{y'} +(s  a_3^2 - 6 a_1 a_2) \partial_{z'} \right) \rangle  .  
	  \end{align*}
		Hence, we have 
		\[  \phi_* D_t' (\w)  = \langle ( s  a_1^2 - 6 a_2 a_3) \partial_{x'} +(s  a_2^2 - 6 a_1 a_3) \partial_{y'} +(s  a_3^2 - 6 a_1 a_2) \partial_{z'}\rangle. \]
		Recalling the computation of $ \phi_* D_t'' (\w)  $ as obtained in Equation \eqref{equ:Dtw}: 
		\[   \phi_* D_t'' (\w) = \langle -a_2 \partial_{x'} + a_1 \partial_{y'} ,  - a_3 \partial_{x'} + a_1 \partial_{z'} \rangle , \] 
		we arrive at the decomposition 
		\[
		D_s =  \phi_* D_t' (\w)  \oplus   \phi_* D_t'' (\w)  .
		\]
		Thus, the decomposition of $ x'\partial_{x'}+y' \partial_{y'}+ z'\partial_{z'} $ is given by 
		\begin{align*}
		& \phi_*(  x\partial_x+y \partial_y+ z\partial_z ) \\
		&= x' \partial_{x'}+y' \partial_{y'} +z' \partial_{z'} \\
		&= \frac{\phi(\w)}{f_{\frac{-18}{s}}(a_1,a_2,a_3) } \left(  (  a_1^2 - \frac{6}{s} a_2 a_3) \partial_{x'} +(  a_2^2 - \frac{6}{s} a_1 a_3) \partial_{y'} +(  a_3^2 - \frac{6}{s} a_1 a_2) \partial_{z'} \right) 
		+ \Delta_\w   ,
		\end{align*}
		for some $ \Delta_\w \in \phi_* D_t'' (\w)  $.
		Comparing this with \eqref{equ:Eular}, we see the image of $ \partial_x $ is given by 
		\[  \phi_*(\partial_x) = \frac{1}{f_{\frac{-18}{s}}(a_1,a_2,a_3)} \left(  (  a_1^2 -\frac{6}{s} a_2 a_3) \partial_{x'} +(  a_2^2 - \frac{6}{s} a_1 a_3) \partial_{y'} +(  a_3^2 - \frac{6}{s} a_1 a_2)  \partial_{z'}\right). \]
		Notice that
		\begin{align*}
		\phi_* (\partial_x) &=  a_1' \partial_{x'}+b_1' \partial_{y'}+c_1'  \partial_{z'}.
		\end{align*}
		So we obtain  the equalities \eqref{eq:a1p}\eqref{eq:b1p}\eqref{eq:c1p}.
  \end{proof}

  \begin{lem}\label{lem:a1^3}
  	Assume that $ t,s \in \mathbb{C}(\tilde{E}_6) \setminus \{ 0, 6, 6\rho , 6 \rho^2 \} $. Let $ \phi $ be a homogeneous isomorphism contained in $\Grp^1(\tilde{E}_6)(t,s)$. Denote by $ a_1, a_2, a_3 $ the entries in the first row of the representation matrix \eqref{equ:phi} of $ \phi $. Then only one the following two cases may occur:
  \begin{enumerate}
  	\item  If all elements $ a_1  , a_2  , a_3  $ are nonzero, then $ a_1^3 = a_2^3= a _3^3  $;
  		\item  If at least two elements of $ \{a_1,a_2,a_3\} $ are equal to zero.
  \end{enumerate} 
  \end{lem}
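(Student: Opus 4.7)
I would first generalize the preceding lemma by repeating its proof with the weight-one element $x$ replaced by $y$ and by $z$; this yields explicit expressions for the second and third columns of $M^{-1}$ entirely analogous to \eqref{eq:a1p}--\eqref{eq:c1p}. The off-diagonal entries of the identity $M M^{-1} = I$ then translate into the six ``cross'' relations
\[
G(u;v) := u_1 v_1^2 + u_2 v_2^2 + u_3 v_3^2 - \tfrac{6}{s}\bigl(u_1 v_2 v_3 + u_2 v_1 v_3 + u_3 v_1 v_2\bigr) = 0,
\]
one for each ordered pair of distinct rows of $M$; equivalently, these are the equations $\phi(x_i^2 x_j) = 0$ dictated by the degree-three structure of $\mathcal{A}^1(\tilde{E}_6;s)$. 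Moreover, comparing the diagonal entries of $M^{-1} M = I$ with equations (I)--(III) derived from $\phi(f_t) = \lambda f_s$, and using $t \neq 0$, yields the column-product equality $a_1 b_1 c_1 = a_2 b_2 c_2 = a_3 b_3 c_3$.

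\textbf{Eliminating the ``exactly one zero'' case.} By the permutation symmetry of $f_t$, I may assume $a_3 = 0$ with $a_1 a_2 \neq 0$. From $G(b;a) = 0$ one solves $b_3 = s(a_1^2 b_1 + a_2^2 b_2)/(6 a_1 a_2)$; substituting into $G(a;b) = 0$ simplifies to
\[
\frac{b_1 b_2 (a_1^3 + a_2^3)}{a_1 a_2} = 0.
\]
Since $\phi$ is an isomorphism and $x^3 \neq 0$ in $\mathcal{A}^1(\tilde{E}_6;t)$, the direct computation $\phi(x)^3 = -\tfrac{s}{3}(a_1^3+a_2^3)\, x'y'z'$ must be nonzero, so $a_1^3 + a_2^3 \neq 0$ and thus $b_1 b_2 = 0$; symmetrically $c_1 c_2 = 0$. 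A brief sub-case analysis --- discarding $b_2 = 0$ via (T-*), and discarding $c_1 = 0$ because rows $b$ and $c$ would become proportional (violating invertibility of $M$) --- reduces to $b_1 = c_2 = 0$ with explicit formulas for $b_3$ and $c_3$. Inserting these into (V), (VI) (from $\phi^T$) and (I), (II) (from $\phi$) produces an identity of the form $a_1^3 a_2^3 (s^3 - 216)^2 = 0$, contradicting $s \notin \{6, 6\rho, 6\rho^2\}$; the residual sub-case in which additionally $a_1^3 = a_2^3$ is ruled out because the coefficient-of-$x'^2 y'$ equation of $\phi(f_t) = \lambda f_s$ collapses to $9\rho^j = 0$.

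\textbf{The all-nonzero case.} Assuming $a_1, a_2, a_3 \neq 0$, I would pursue a parallel but now symmetric elimination: solve $G(b;a) = 0$ for one $b_i$, substitute into $G(a;b) = 0$, and exploit the column-product equality together with the level-set identities $f_{-18/s}(a) = f_{-18/s}(b) = f_{-18/s}(c)$ supplied by (IV)--(VI). The resulting polynomial identity must vanish, and the expected factorization isolates $(a_1^3 - a_2^3)(a_2^3 - a_3^3)(a_3^3 - a_1^3)$ multiplied by a factor in $s$ that is nonzero on $\mathbb{C}(\tilde{E}_6)\setminus\{0,6,6\rho,6\rho^2\}$, forcing the desired $a_1^3 = a_2^3 = a_3^3$. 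The principal obstacle is precisely this symmetric elimination: without the luxury of a vanishing coordinate the algebra must be carried out abstractly, and recognising the correct factor requires careful coordination of the cross-relations $G(u;v)=0$, the transposed identities (IV)--(VI), and the mixed-square equations from $\phi(f_t) = \lambda f_s$.
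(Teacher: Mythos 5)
The main content of the lemma is assertion (1) --- that when $a_1,a_2,a_3$ are all nonzero one must have $a_1^3=a_2^3=a_3^3$ --- and this is precisely the part you have not proved. Your final paragraph describes an intended ``symmetric elimination'' and asserts that ``the expected factorization isolates $(a_1^3-a_2^3)(a_2^3-a_3^3)(a_3^3-a_1^3)$'', but you yourself label this elimination the principal obstacle and never exhibit the identity, the factorization, or the nonvanishing of the cofactor in $s$. Nothing in your setup guarantees that the cross-relations $G(u;v)=0$, the level-set identities and the coefficient equations of $\phi(f_t)=\lambda f_s$ combine into a single polynomial with that factor; as written this is a prediction about the outcome of a computation, not an argument. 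The paper closes this case with a specific mechanism your sketch lacks: applying Lemma \ref{lem:transpose} in both directions produces the two reciprocal systems \eqref{eq:a1p}--\eqref{eq:c1p} and \eqref{eq:a1}--\eqref{eq:a3}, and eliminating the primed entries shows that $a_1^3,a_2^3,a_3^3$ are all roots of one equation of the form $(s^2+2)X+C/X=\Lambda$, which has at most two roots, so at least two of the cubes coincide; the residual possibility that exactly two coincide is then excluded by rerunning the computation for $\phi$ composed with $\operatorname{diag}(1,\rho,1)$ and comparing the two resulting formulas for $a_3$. Without some such device, case (1) of the lemma is simply missing from your proof.

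By contrast, your elimination of the ``exactly one zero'' configuration is essentially correct and checks out: with $a_3=0$ and $a_1a_2\neq 0$, the pair $G(b;a)=0$, $G(a;b)=0$ does collapse to $(a_1^3+a_2^3)\,b_1b_2=0$; the factor $a_1^3+a_2^3$ is nonzero because $\phi(x)^3=-\tfrac{s}{3}(a_1^3+a_2^3)\,x'y'z'$ must be nonzero; and the surviving sub-case $b_1=c_2=0$ is incompatible with $\phi(f_t)=\lambda f_s$ unless $s^3=216$. This is a genuinely different and more self-contained route than the paper's, which instead deduces the degenerate case \emph{from} case (1) applied to the transpose of $\phi^{-1}$ and lands on the contradiction $s^3=-27$; your version has the advantage of not depending on the all-nonzero case. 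Two smaller issues: your appeal to $\phi(f_t)=\lambda f_s$ for a morphism of $\Grp^1(\tilde{E}_6)(t,s)$ is legitimate only because the second assertion of Lemma \ref{lem:assertion} (that $\Grp^1=\Grp^\infty$ away from the jump points) has already been established, and you should cite it; and the column-product identity $a_1b_1c_1=a_2b_2c_2=a_3b_3c_3$ is not obtained in the way you describe --- the diagonal entries of $MM^{-1}=I$ are automatically satisfied by the definition of $f_{-18/s}$, and in the paper this identity is derived only \emph{after} all entries are known to be cube-root-of-unity multiples of $a_1$, so it cannot be fed into case (1) as an input.
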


\begin{proof}
 (1) Firstly, we assume that $ a_1,a_2,a_3$ are nonzero. Let $ \phi $ be a morphism contained in $ \Grp^1(\tilde{E}_6)(t,s)$. Lemma \ref{lem:transpose} implies that the transpose matrix of $  \phi^{-1} $ represents a
	morphism in $ \Grp^1(\tilde{E}_6)(\frac{-18}{t}, \frac{-18}{s}) $. Analogous to the equalities \eqref{eq:a1p}-\eqref{eq:c1p}, we deduce the equations
  	  	\begin{subequations}
  		\begin{equation} \label{eq:a1}
  	a_1 = ( a_1^{\prime 2} + \frac{s}{3} b_1'  c_1' )/ f_s(a_1', b_1',c_1'),
  		\end{equation}
	\begin{equation}  \label{eq:a2}
  	a_2 = ( b_1^{\prime 2}  + \frac{s}{3} a_1'  c_1')/ f_s(a_1', b_1',c_1'),
  		\end{equation}
  		\begin{equation}  \label{eq:a3}
  	a_3 = (  c_1^{\prime 2} + \frac{s}{3} a_1'  b_1')/ f_s(a_1', b_1',c_1'). 
  		\end{equation}
  	\end{subequations} 
  	Combining these equations shows that the values $a_i $ with $i =1,2,3$ verify the equality 
  	\begin{align*}
  	     & a_i^3 ( s^2 + 2)   + ( \frac{s^3}{3} + 36) \frac{1}{a_i^3} a_1^2 a_2^2 a_3^2 = \Lambda 
  	\end{align*}
  	where $ \Lambda $ denotes a constant of the form 
	\[
		\Lambda:=2 (a_1^3+a_2^3+a_3^3)+s^2 (f_{\frac{-18}{s}}(a_1,a_2,a_3))^2 f_s(a_1',b_1',c_1').
	\]
  	Equivalently, we find that  
  	$ a_1^3, a_2^3, a_3^3 $ are roots of $G(X) $ defined as 
  	\begin{equation}\label{eq:solution}
  	     G(X):=( s^2 + 2) \cdot X   + ( \frac{s^3}{3} + 36) a_1^2 a_2^2 a_3^2 \cdot \frac{1}{X} - \Lambda.
  	\end{equation} 
  	Since the function $G(X) $ can have at most two distinct roots,  at least two of the values $ a_i^3 $ must coincide.
  	If our lemma does not hold, then we can assume without loss of generality that $ 
  	a_1 ^3= a_2^3 \not= a_3^3  $. From the expression in \eqref{eq:solution}, we observe that both
  	$ ( \frac{s^3}{3} + 36) $ and $( s^2 + 2)  $  are nonzero, which implies that 
  	\[ a_1^3 a_3^3= a_2^3 a_3^3 =   \frac{ s^3 + 108 }{ 3 s^2 + 6  } \cdot a_1^2 a_2^2 a_3^2. \]
  	This can be simplified to the relation
  	\begin{equation}\label{eq:a3_s}
  	a_3  = \frac{ s^3 + 108 }{ 3 s^2 + 6  }  \cdot \frac{a_2^2}{a_1} .
  	\end{equation}

  	We now consider another morphism $ \phi' $ in $ \Grp^1(\tilde{E}_6)(t,s) $ represented by the matrix
  	\[  \begin{pmatrix}
  	a_1 & a_2\rho & a_3  \\
  	b_1 & b_2\rho & b_3  \\
  	c_1 & c_2\rho & c_3  
  	\end{pmatrix} =  \begin{pmatrix}
		a_1 & a_2 & a_3  \\
		b_1 & b_2 & b_3  \\
		c_1 & c_2 & c_3  
		\end{pmatrix}
		\cdot \begin{pmatrix}
			1 &  0  & 0  \\
			 0  & \rho & 0  \\
			 0 & 0 & 1  
			\end{pmatrix}
	\]
  	It is clear that 
  	$ a_1^3 = (a_2 \rho)^3 \not= a_3^3 $. The arguments above applied to $ \phi' $ lead us to the expression
  	\begin{equation}\label{eq:a3_p}
  	a_3  = \frac{ 6 (s \rho)  ^3 + 648 }{ 18 (s \rho) ^2 + 36  } \cdot \frac{(a_2\rho)^2}{a_1}  
  	\end{equation} 
	which parallels \eqref{eq:a3_s}. 
  	Combining Equations (\ref{eq:a3_s}) with (\ref{eq:a3_p}) yields that 
  	\[ \frac{  (s \rho)  ^3 + 108 }{ 3 (s \rho) ^2 + 6  } \cdot \rho^2 = \frac{  s ^3 + 108 }{ 3 s ^2 + 6  } , \] 
  	leading to 
  	\[ \rho^2 = 1  , \]
  	which contradicts the assumption $ \rho^3 = 1 $. So we obtain
  	$ a_1^3 = a_2^3 =a_3^3 $.

  	(2) In the second case, if at least one of the values $ a_i $ equals zero, we can set $ a_3 = 0 $ without loss of generality. The equations \eqref{eq:a1p}-\eqref{eq:c1p} imply:
  	\begin{align*}
  	a_1' &=  a_1^2 / f_{\frac{-18}{s}}(a_1,a_2,a_3),\\
  	b_1' & = a_2^2/  f_{\frac{-18}{s}}(a_1,a_2,a_3),\\
  	c_1' & = -\frac{6}{s} a_1 a_2/ f_{\frac{-18}{s}}(a_1,a_2,a_3).
  	\end{align*}
  	Assuming both $ a_1 $ and $ a_2 $ are nonzero, we obtain $ a_1',b_1', c_1' $ are also nonzero. Applying the result from the first case to the transpose matrix of $ \phi^{-1} $, we get
  	\begin{equation}\label{eq:a1p3}
  	  a_1'^3 = b_1'^3 =c_1'^3  . 
  	\end{equation} 
 Now Equation $  \eqref{eq:a3} $ implies
  	\[  0 = a_3 = ( a_1'^2 + \frac{s}{3} b_1' c_1' )/ f_s(a_1',b_1',c_1'). \]
  	It follows that $ a_1'^2 = - \frac{s}{3} b_1' c_1'$. 
   Combining this with Equations \eqref{eq:a1p3}, we have 
   $ 	s^3= -27$.
   This contradicts our initial assumption about $s$. Therefore, the assumption about $a_1$ and $a_2$ must be false, ensuring that either $a_1$ or $a_2$ is zero.
	In conclusion, the second case of the lemma is validated.
  \end{proof}

 We are in the position to give a complete description for 
 $ \Grp^1(\tilde{E}_6)(t,s)$.
  \begin{proof}[Proof of the third assertion of Lemma \ref{lem:assertion}]
	We are employing the notations of Lemma  \ref{lem:a1^3}.

  	Case (1): There exists some index $  i  = 1,2,3 $ such that $ a_i = 0 $.

  	For instance, if we set $ a_3 = 0 $, applying Lemma \ref{lem:a1^3} reveals that either $ a_1 $ or $ a_2 $ is zero. 
	For the case where $ a_1 \not = 0, a_2 = a_3 = 0$, Equations \eqref{eq:a1p}\eqref{eq:b1p}\eqref{eq:c1p} can be rewritten as
  	\begin{equation*}
  	a_1' = s a_1^2/\theta, \quad
  	b_1' =0, \quad 
  	c_1'= 0.
  	\end{equation*} 
  	Combining these equalities with the original constraints
	\[  b_1 a_1' + b_2 b_1'+ b_3 c_1' = 0 \]
	 and 
	 \[ c_1 a_1' + c_2 b_1'+ c_3 c_1' = 0 \]
	 derided from the definition, we have
  	$  b_1 =c_1 = 0  $.
  	Upon extending the above arguments to  $ \{b_1, b_2,b_3\}  $ and $ \{c_1, c_2,c_3 \} $ respectively, we conclude that
  	$  \phi $ is a matrix of types \Rmnum{1} or \Rmnum{2}, where 
  	\[ \text{\Rmnum{1}}= \begin{pmatrix}
  	\lambda_1 & 0  & 0  \\
  	0 & \lambda_2  & 0 \\
  	0 & 0  & \lambda_3   
  	\end{pmatrix} 
  	\text{ and \Rmnum{2}}
  	= \begin{pmatrix}
  	\lambda_1  & 0  & 0  \\
  	0 & 0 & \lambda_2 \\
  	0 & \lambda_3 & 0   
  	\end{pmatrix} .
  	\]
  	If we also consider the cases $ a_3 \not=0 , a_1 = a_2 = 0 $ and $a_2 \not= 0,  a_1 = a_3 = 0 $, then we obtain four additional types:
  	\[  
	\text{\Rmnum{3}} =\begin{pmatrix}
  	0 &  \lambda_1  & 0  \\
  	\lambda_2 &  0 & 0 \\
  	0 & 0  & \lambda_3   
  	\end{pmatrix},
  	\text{\Rmnum{4}}
  	= \begin{pmatrix}
  	0 &  \lambda_1  & 0  \\
  	0 & 0 & \lambda_2 \\
  	\lambda_3 & 0 & 0   
  	\end{pmatrix} ,  
	  \text{\Rmnum{5}}= \begin{pmatrix}
  	0 &    0& \lambda_1  \\
  	\lambda_2 &  0 & 0 \\
  	0 & \lambda_3  & 0   
  	\end{pmatrix} 
  	,
  	\text{\Rmnum{6}}=\begin{pmatrix}
  	0 &   0 & \lambda_1  \\
  	0 &  \lambda_2  & 0 \\
  	\lambda_3 &0 & 0   
  	\end{pmatrix} .
  	\]
  	The first assertion of Lemma \ref{lem:transpose} states that 
	\[
			\F_\phi := \frac{  \F_{\frac{-18}{s}}^{(x',y',z')}( \phi(\w))   }{ \F_{\frac{-18}{t}}^{(x,y,z)}(\w)   }
	\]
	is invariant for any $ \w $ of weight one.
    By choosing $ \w = x, y, z $ respectively, we get the equality
  	\begin{equation}\label{equ:f18}
		 f_{\frac{-18}{s}}(a_1,a_2,a_3) = f_{\frac{-18}{s}}(b_1,b_2,b_3)=f_{\frac{-18}{s}}(c_1,c_2,c_3).
\end{equation}
  	 Substituting the values of $ a_i,b_i,c_i $ from the six types \Rmnum{1}-\Rmnum{6}, we conclude that 
	  \[\lambda_1^3 = \lambda_2^3 =\lambda_3^3  
	  \]
	in each type respectively.
	There are exactly $ 54 $ matrices contained in these six types up to scalar multiplication and each matrix is generated by $ A_1 ,A_3, A_4 $.

  	Case (2): Suppose that $a_1,a_2,a_3$ are nonzero.

  	From Case (1), we know that all entries of $ \phi $ are nonzero.
  	Applying Lemma \ref{lem:a1^3} to the sets $\{ a_1, a_2, a_3 \}$, $\{ a_1, b_1, c_1\}$,  $ \{ a_2,b_2,c_2 \} $ and $ \{ a_3,b_3,c_3 \} $ respectively, we conclude that
  	all entries of $ \phi $ are contained in the set $  \{ a_1  ,a_1 \rho, a_1\rho^2 \} $.
  	
  	From Equation (\ref{equ:f18}), we get
  	\begin{equation}\label{eq:aaa}
  	a_1 a_2 a_3  = b_1 b_2 b_3 = c_1 c_2 c_3.
  	\end{equation}
  	By considering the transpose of $ \phi $, we deduced from Equation \eqref{eq:aaa} that 
  	\begin{equation}\label{eq:abc}
  	a_1 b_1 c_1  = a_2 b_2 c_2 = a_3 b_3 c_3  . 
  	\end{equation}
  	Fix $ a_1 \in \mathbb{C}^* $. Computational checks reveal that only $ 162\times3 $ non-singular matrices verify Equations \eqref{eq:aaa} and \eqref{eq:abc}.
  	Up to scalar multiplication, there are exactly $ 162 $ matrices. One may check directly that each matrix is generated by $ A_1 ,A_2,A_3, A_4 $.
  \end{proof}

\subsection{Continue family of representations of the first Yau algebra}\label{sec:representation}
In this section, we prove assertion (3) of Theorem \ref{thmA}. Following the approach in \cite{Yau1983Continuous}, we construct a basis in which the coefficients of the Lie bracket are independent of $t$. For the entirety of this section, we assume that $  t^3 \not = 0 , 216, -27 $.

Recalling that the basis of $ \mathcal{A}^{1}(\tilde{E}_6; t )$ is given by 
 \[1, x, y, z, x^2, y^2, z^2, x y, x z, y z,  xyz. \]
We choose an alternative basis for $ L^{1}(\tilde{E}_6; t ) $ as follows. 
\begin{enumerate}
	\item Degree 0:\\
	$  e_1:= x\partial_x  + y\partial_y + z\partial_z $;
\item Degree 1:\\
$  e_2:= 3 x^2 + t y z \partial_x  $,
$  e_3:= 3 y^2+ tx z \partial_x $,
$  e_4:= 3  z^2+tx y  \partial_x  $,
$  e_5:=  3  x^2 + t y z  \partial_y $, 
$  e_6:= 3 y^2 +t x z  \partial_y $,
$ e_7:= 3 z^2 + t x y \partial_y$,
 $ e_8:=  3 x^2 +   t y z\partial_z$,
$  e_9:=   3 y^2+t x z \partial_z $,
$  e_{10}:= 3  z^2  + t x y \partial_z $,
$  e_{11}:=x^2\partial_x+ 2 x y\partial_y  $,
$  e_{12}:=x^2 \partial_x +  2 x z\partial_x  $,\\
$  e_{13}:=2 x y\partial_x+  y^2 \partial_y  $,
$  e_{14}:=  y^2\partial_y+ 2 y z\partial_z $,
$  e_{15}:=  2 y z\partial_y+ z^2\partial_z $,\\
$  e_{16}:=2 x z\partial_x+  z^2\partial_z  $,
$  e_{17}:= (1/t)x^2\partial_x   $,
$  e_{18}:=  (1/t)y^2 \partial_y   $,
$  e_{19}:=  (1/t)z^2\partial_z  $;
\item Degree 2:\\
$  e_{20}:=   xyz \partial_x $,
$  e_{21}:=   xyz \partial_y  $,
$  e_{22}:=   xyz\partial_z $.
\end{enumerate}

Observe that  $ L^{1}(\tilde{E}_6; t ) $ is a graded Lie algebra with the decomposition:
 \[L^{1}_t = D_0 \oplus D_1   \oplus D_2, \]
 where $ D_i $'s are generated by homogeneous derivations of degree $ i $ respectively. Since there are no derivations of degree three, we have
 $ [D_1,D_2] = 0 $. The subspace $ D_0 $ is generated by the Euler derivation $ e_1 $. We obtain
 $ [ e_1, u ] = u $ for $ u \in  D_1 $  and
 $ [ e_1, u ] = 2 u $ for $ u \in  D_2 $.    Next, we define the subspaces $ U:=\langle  e_2, \cdots, e_{10} \rangle $, $ V:=\langle  e_{11}, \cdots, e_{16} \rangle $, and 
 $W:=\langle  e_{17}, e_{18}, e_{19} \rangle $. This leads to the decomposition:
  \[ D_1 = U \oplus V \oplus W  . \] 
 One can check that
 \[   [U,U] =0  , [U,V] =0 ,   [W,W]=0  ,   [V,W] =0  . \] 
 Furthermore, the matrix representation for Lie bracket $ [ U,W] $ is given by
 \[\left( \begin{matrix}
 2 e_{20}& 0& 0&  2   e_{21}& 0&0 & 2 e_{22}& 0& 0\\
 0 & 2 e_{20}& 0& 0&   2  e_{21}& 0&0 & 2 e_{22}& 0 \\
 0& 0 & 2 e_{20}& 0& 0&   2 e_{21}& 0&0 &2 e_{22} 
 \end{matrix}  \right),
 \]
and the representation for $ [V,V] $ is given by
 \[\left( \begin{matrix}
 0 &            0&          0& 4 e_{22}&           0&   -4 e_{21}  \\  
 0 &            0&  -4 e_{22}&          0&   4 e_{21} &            0 \\  
 0 &  4 e_{22}&          0&          0&  -4 e_{20} &            0 \\  
-4 e_{22}&     0&        0&         0 &          0 &   4 e_{20}  \\ 
 0&     -4 e_{21}&  4 e_{20}&         0 &          0 &   0  \\ 
 4 e_{21}&     0&        0&      -4 e_{20} &          0 &   0   
 \end{matrix} \right).
 \] 
 This implies that all coefficients of Lie bracket under our basis are constant, indicating that the first Yau algebra first Yau algebra does not depend on the parameter $ t $. 
 We have finished the proof of Theorem \ref{thmA}.

 \section{Groupoid of the $\tilde{E}_7$-family}
 \subsection{Outline of the proof of Theorem \ref{thmB}}
Recall that the $ \tilde{E}_7 $-family is defined as the zero locus of  polynomial
\[
f_t = x^4 + y^4 + z^2 + t x^2 y ^2 , \text{where $ t ^2 \not= 4 $}.
\]
Firstly, we can directly verify that each matrix $ \tilde{B}_{\alpha,\beta, \gamma} $ in Theorem \ref{thmB} induces an isomorphism from $ \mathcal{A}^{k}(\tilde{E}_7;t)$ to $ \mathcal{A}^{k}(\tilde{E}_7;s) ) $, where $s=\pi'(B_{\alpha,\beta})(t) $ and $k \in \mathbb{Z}_{\geqslant 0 }\cup \{ \infty \}$. For example, let $ \phi $ be the morphism represented as $ \tilde{B}_{\alpha,\beta, \gamma} $ with $ \gamma = \sqrt{2 + t \alpha^2 }$. In terms of transformations, we can express this as:
\[
\begin{pmatrix}
	x \\
	y \\
	z \\
\end{pmatrix}	=
\begin{pmatrix}
	1 & \beta & 0  \\
	\alpha & -\alpha \beta & 0 \\
	0 & 0 & \gamma
\end{pmatrix}	
\begin{pmatrix}
	x' \\
	y' \\
	z' 
\end{pmatrix}.
\]
Restricting to the coordinates $(x,y)$, we get 
\[ h = \pi' (B_{\alpha, \beta}) = \left[t \mapsto \frac{\beta^2 (12 -2 \alpha^2 t )}{2 + t \alpha^2 } \right]\]
by the definition of $\pi'$ in \eqref{equ:pip}. A direct computation shows 
\begin{align*}
	f_t(x,y,z) &= (x' + \beta y')^4 + (x' - \beta y') ^4 + \gamma^2 z'^2 + \alpha^2 t (x'^2  - \beta^2 y'^2) ^2\\
	& = 2x'^4 + 2y'^4 + 12 \beta^2 x'^2 y'^2 + \gamma^2 z'^2 + \alpha^2 t (x'^4  - 2 x'^2 \beta^2 y'^2 + y'^4 )\\
	&= (2 + \alpha^2 t) f_s(x',y',z')
\end{align*}
where 
\[
s = \frac{\beta^2 (12 -2 \alpha^2 t )}{2 + t \alpha^2 } = h(t)	. \]
It follows that $ \tilde{B}_{\alpha, \beta, \gamma}$ is a morphism in groupoid $\Grp^{\infty}(\tilde{E}_7)$.

From \cite{Seeley1990Variation}, we know that for $t \in \{ 0, \pm 6 \} $, we encounter the jump point case, in which the zeroth Yau algebra  $  L^0(\tilde{E}_{7}; t ) $ is $12$-dimensional. In contrast, for the cases where $t \not = 0, \pm 6 $, $  L^0(\tilde{E}_{7}; t ) $ is $11$-dimensional. 

The proof for  $t = 0 , \pm 6 $ of Theorem \ref{thmB} follows the same argument as in the assertion (1) of Lemma \ref{lem:assertion}. Therefore, we omit the details.

When $k \geqslant 2$, we conclude from Lemma \ref{lem:groupoid} that 
\[
	\Grp^k(\tilde{E}_7) = \Grp^\infty(\tilde{E}_7) \subseteq \Grp^i (\tilde{E}_7) 
\]
for $ i = 0 , 1$. So the assertion (2) in Theorem \ref{thmB} is concluded by the assertion (1). 

It remains to prove the case $ k = 0, 1 $ and $ t \not = 0, \pm 6$. We will compute the groupoid $\Grp^0(\tilde{E}_7)$ by restricting to the coordinates $(x,y) $. In fact, the complete collection of isomorphism with respect to $(x,y)$ are exactly the group $G'$, which is also described in \cite{Eastwood2004Moduli} without proof. We utilize the Torelli-type theorem from \cite{Seeley1990Variation} to show that the isomorphisms of the $\tilde{E}_7$-family preserve the four lines in the zeroth Yau algebra. In this sense, the group $G'$ is indeed the symmetric group acting on the four lines. 

For the groupoid $\Grp^1(\tilde{E}_7)$, we need to prove the assertion (3) in Theorem \ref{thmB}, namely the Torelli-type theorem of the first Yau algebra, which implies that the morphism of groupoid $\Grp^1(\tilde{E}_7)$ are derived from $\Grp^\infty(\tilde{E}_7)$. As a consequence of the Torelli-type theorem, we can reduce the groupoid $\Grp^1(\tilde{E}_7)$ to $\Grp^\infty(\tilde{E}_7)$, which completes the proof of Theorem \ref{thmB}.
\subsection{The zeroth Yau algebra}
We assume that $t^2 \not= 0 ,4,36$. The moduli algebra is written as
\begin{align*}
	\mathcal{A}^0(\tilde{E}_7;t ) = \mathbb{C}[[x,y]] / I_0(t)
\end{align*}
where, as usual,
 \[ 
I_0(t) = \left \langle  4x^3 + 2t x y^2, 4 y^3 + 2 t x^2 y ,z \right \rangle.
\]
Notice that for $ t \in \mathbb{C}(\tilde{E}_7)$, we have 
\[
	(4- t^2) x^3 y =  y ( 4x^3  + 2t x y^2 ) - \frac{t}{2}x (4 y^3 + 2 t x^2 y) =0 \mod I_0(t)  
 \]
 and similarly 
 \[
	(4- t^2) y^3 x =  x ( 4y^3  + 2t y x^2 ) - \frac{t}{2}y (4 x^3 + 2 t y^2 x) =0 \mod I_0(t).  
 \]
The monomials 
\[
	1, x, y, x^2, xy , y^2, x^2 y , y^2 x , x^2 y^2 
\]
form a basis of $\mathcal{A}^0(\tilde{E}_{7}; t ) $
with multiplication rule 
\[
x^3  =  - \frac{t}{2}  x y^2 ,\quad  y^3 = - \frac{t}{2} x^2 y , \quad  x^3 y = y^3 x = 0.	
\]
From \cite{Seeley1990Variation}, we know that the jump points of the $\tilde{E}_7 $-family are $ t = 0, \pm 6$, in which case 
the zeroth Yau algebra $  L^0(\tilde{E}_{7}; t ) $ is $12$-dimensional. In contrast, for the case $t \not = 0, \pm 6 $, the Lie algebra $  L^0(\tilde{E}_{7}; t ) $ is 11-dimensional and 
there exists a basis for $L^0(\tilde{E}_{7}; t ) $: 
\begin{itemize}
	\item Degree 0:\\
	$ e_0 = x \partial_x + y \partial_y$;
	\item Degree 1:\\
	$e_1 = x^2 \partial_x + xy \partial_y$, $e_2 = yx \partial_x + y^2 \partial_y $,
	$ e_3 = (t^2 - 12) xy \partial_x + 4 tx^2 \partial_y $, \\
	$ e_4 = 4t y^2 \partial_x +(t^2 -12) xy \partial_y$;
	\item Degree 2:\\
	$e_5 = x^2 y  \partial_x $, 
	$e_6 = xy^2 \partial_y $, 
	$e_7 = xy^2 \partial_x $,
	$ e_8 = x^2 y \partial_y$;
	\item Degree 3:\\
	$e_9 =  x^2 y^2 \partial_x $,
	$e_{10} = x^2 y^2 \partial_y $.
\end{itemize}
Choose a solution $C_t$ of the equation 
\begin{equation}\label{equ:C_t}
	C_t^2 + \frac{1}{C_t^2} = \frac{12}{t}.
\end{equation}
We define $l_1(t),\ldots,l_4(t)$ (without orderings) to be the four lines on the plane $ \langle e_9, e_{10} \rangle $  whose slopes are exactly the solutions of equation \eqref{equ:C_t}, specifically:
\begin{align*}
	l_1(t) =& \left \langle C_t e_9 +  e_{10} \right \rangle, \\
	l_2(t) =& \left \langle  e_9 + C_t e_{10}\right \rangle,\\
	l_3(t) =& \left \langle -C_t e_9 +  e_{10}\right \rangle,\\
	l_4(t) =& \left \langle  e_9  -C_t e_{10}\right \rangle.
	\end{align*}
 The set $L^4$ of these four lines is shown to be invariant under homogeneous isomorphisms. 
These lines have a cross-ratio, relative to a choice of orderings. Considering the symmetric group $ S^4 $ acting on the four lines, the $24$ orderings provide six different values of the cross-ratio:
\[
 \frac{6+t}{2t}, \frac{2t}{6+t }, \frac{6+t}{6-t }, \frac{6-t}{6+t }, \frac{2t}{t-6 },\frac{t-6}{2t}
\]
which constitute the six continuous invariants of $  L^0(\tilde{E}_{7}; t ) $. The set of the six continuous invariants is equivalent to the $j$-function $ j(\tilde{E}_7;t)$.
 In addition, the transformations preserve $L^4$ and leaves the six continuous invariants unchanged generate the Klein four-group.
We adopt the approach that the morphisms in $ \Grp^0(\tilde{E}_7) $ induce linear maps on the plane $ \langle e_9,e_{10} \rangle $. In terms of matrices, the morphisms in $ \Grp^0(\tilde{E}_7) $ essentially derive the action of $ G' $ on the four lines when restricted to the coordinates $x $ and $ y $.
\begin{lem}\label{lem:symmetric}
	The group $ G' $ acts on the set $ L^4 = \{l_1(t), \ldots , l_4(t) \}$ in one-to-one correspondence with the action of symmetric group $S^4$. Specifically, we have:
	\[
		\phi_* (l_i(t)) = l_{\sigma(i)}( s )
	\]
	for $\sigma \in S^4$, $ \phi \in \Grp^{0}(t,s) \cap G' $ and $t,s $ contained in some domain of $ \mathbb{C}(\tilde{E}_7) $. In particular, the subgroup $ G'_0 \subseteq G' $ acts on $ \{l_1(t), \ldots , l_4(t) \}$ as the Klein four-group.
\end{lem}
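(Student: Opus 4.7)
The plan is to verify the action of $G'$ on the four lines $L^4$ explicitly on each of the three generators of $G'$, and then to promote this to the claimed $S^4$-correspondence by a counting/faithfulness argument. I would rely on the fact already established in the excerpt that each $\tilde{B}_{\alpha,\beta,\gamma}$ furnishes a morphism in $\Grp^{\infty}(\tilde{E}_7) \subseteq \Grp^0(\tilde{E}_7)$ sending the parameter $t$ to $s = \pi'(B_{\alpha,\beta})(t)$, so that the induced $\phi_\ast$ maps $L^0(\tilde{E}_7;t)$ to $L^0(\tilde{E}_7;s)$ and in particular preserves the distinguished degree-$3$ plane $\langle e_9, e_{10}\rangle$.

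First I would compute $\phi_\ast(e_9)$ and $\phi_\ast(e_{10})$ for the three families of generators. For $B_{\alpha,0}$, the transformation $x = x'$, $y = \alpha y'$ induces $\partial_x \mapsto \partial_{x'}$, $\partial_y \mapsto \alpha^{-1}\partial_{y'}$, giving $\phi_\ast(e_9) = \alpha^2 e_9'$ and $\phi_\ast(e_{10}) = \alpha\, e_{10}'$, with $s = \alpha^2 t$. The lines $l_i(t)$ with slopes $\pm C_t, \pm C_t^{-1}$ are sent to lines with slopes $\pm \alpha^{-1} C_t, \pm \alpha^{-1} C_t^{-1}$, and one checks directly from \eqref{equ:C_t} that if $C_t^2 + C_t^{-2} = 12/t$ then $(\alpha^{-1}C_t)^2 + (\alpha^{-1}C_t)^{-2}$ equals $12/s$ up to the prescribed symmetry, so the set $L^4(t)$ is sent bijectively to $L^4(s)$. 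The case $B_{0,\beta}$ is analogous and essentially swaps $e_9 \leftrightarrow e_{10}$ with appropriate scalars. For $B_{\alpha,\beta}$, the transformation $x = x' + \beta y'$, $y = \alpha(x' - \beta y')$ gives $x^2 y^2 = \alpha^2 (x'^2 - \beta^2 y'^2)^2$, which I would expand modulo $I_0(s)$ using $x'^3 = -(s/2)x'y'^2$ and $y'^3 = -(s/2)x'^2 y'$ to obtain an expression of the form $\kappa\, x'^2 y'^2$ in $\mathcal{A}^0(\tilde{E}_7; s)$; combining with $\partial_x = \tfrac{1}{2}\partial_{x'} + \tfrac{1}{2\beta}\partial_{y'}$ and $\partial_y = \tfrac{1}{2\alpha}\partial_{x'} - \tfrac{1}{2\alpha\beta}\partial_{y'}$, this yields an explicit $2\times 2$ matrix representing $\phi_\ast|_{\langle e_9, e_{10}\rangle}$, whose action on the slopes $\pm C_t, \pm C_t^{-1}$ is then checked to land in $\{\pm C_s, \pm C_s^{-1}\}$ for the specified $s = \beta^2(12 - 2\alpha^2 t)/(2 + \alpha^2 t)$.

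Once the homomorphism $G' \to \mathrm{Sym}(L^4) \cong S^4$ is in hand, it only remains to prove that it is bijective. Since $|G'| = 24 = |S^4|$, it suffices to verify injectivity, and for this I would argue as follows. By composing with $\pi'\colon G' \to H' \cong S^3$, the kernel of the action on $L^4$ must lie inside $G'_0 = \ker\pi'$, because the image of $\pi'$ already acts faithfully on the cross-ratio invariants of the four lines (these cross-ratios are in bijection with the values of the $j$-function, hence separate the $H'$-orbit of $t$). So the question reduces to showing that the Klein four-group $G'_0 = \{\Id, B_{-1,0}, B_{0,1}, B_{0,-1}\}$ acts faithfully on $\{l_1(t),\dots,l_4(t)\}$. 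But the computations from the first paragraph specialize to: $B_{-1,0}$ sends $(e_9,e_{10}) \mapsto (e_9, -e_{10})$, $B_{0,1}$ swaps $e_9 \leftrightarrow e_{10}$, and $B_{0,-1}$ does both. Each of these visibly acts without fixed point on the set of four lines $l_i(t) = \langle \pm C_t e_9 + e_{10}\rangle, \langle e_9 \pm C_t e_{10}\rangle$, and they realize the three nontrivial double transpositions $(12)(34), (13)(24), (14)(23)$, so $G'_0$ embeds as the Klein four subgroup of $S^4$ and $G' \to S^4$ is an isomorphism.

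The main obstacle will be the bookkeeping in the $B_{\alpha,\beta}$ computation: one must reduce $(x'^2 - \beta^2 y'^2)^2 = x'^4 - 2\beta^2 x'^2 y'^2 + \beta^4 y'^4$ modulo $I_0(s)$ in $\mathcal{A}^0(\tilde{E}_7;s)$, where $x'^4$ and $y'^4$ are not zero but reduce via $x'^3 = -(s/2)x'y'^2$ and $y'^3 = -(s/2)x'^2 y'$ to multiples of $x'^2 y'^2$, and then track how the resulting diagonal-plus-antidiagonal matrix permutes the ordered set of slopes $\{+C_t, -C_t, +C_t^{-1}, -C_t^{-1}\}$. Once this is matched against the transformation $t \mapsto s$ dictated by $\pi'$ in \eqref{equ:pip}, the remaining verifications are essentially automatic, because the subgroup $G'_0$ case already pins down the identification of $L^4$ with the standard $S^4$-set.
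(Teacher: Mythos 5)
Your proposal is correct and follows essentially the same route as the paper: compute $\phi_*(e_9)$ and $\phi_*(e_{10})$, extract the slope $\eta$ of the image line, and verify $\eta^2+\eta^{-2}=12/s$ so that the image of each $l_i(t)$ lands in the set $\{l_1(s),\ldots,l_4(s)\}$. The paper carries this out only for the generator $B_{\alpha,\beta}$ acting on $l_1$ and leaves the remaining generators and the identification with $S^4$ implicit, whereas you additionally supply the order-counting argument and the explicit check that $G'_0$ realizes the three double transpositions; this is a completion of details rather than a different method.
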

\begin{proof}
	We only check for the isomorphism $ \phi $ expressed as the matrix $ B_{\alpha, \beta } $ acting on $l_1$. Notice that 
	\[
		B_{\alpha,\beta}^{-1} =  
		\begin{pmatrix}
			\frac{1}{2} &  \frac{1}{2 \alpha }  \\
			\frac{1}{2  \beta} & -\frac{1}{2 \alpha \beta} 
		\end{pmatrix}.
	\]
	Following the notation in \eqref{equ:partial} we have 
	\[
	\phi_*(e_9) = \phi(x^2 y^2) \phi_*(\partial_x) = (x'+\beta y')^2 (\alpha x' - \alpha\beta y' )^2 (\frac{1}{2} \partial_{x'} + \frac{1}{2 \beta} \partial_{y'})
	\]
	and 
	\[
	\phi_*(e_{10}) = \phi(x^2 y^2) \phi_*(\partial_y) = (x'+\beta y')^2 (\alpha x' - \alpha\beta y' )^2 (\frac{1}{2 \alpha} \partial_{x'} - \frac{1}{2 \alpha \beta} \partial_{y'}).
	\]
	Then the line $ \phi_* (l_1(t)) $ has a direction vector 
	\[
		\phi_*( C_t e_9 +  e_{10}) = \lambda ( \alpha \beta C_t + \beta  ) e_{9} + \lambda (\alpha C_t - 1)  e_{10} ,
	\]
	for some constant $ \lambda $. 
	So the slope of $ \phi_* (l_1(t)) $ is given by
	\[
		\eta = \frac{\alpha \beta C_t + \beta  }{\alpha C_t - 1 }.
	\]
	Therefore,  we obtain 
	\begin{align*}
		\eta^2 + \frac{ 1}{\eta^2 } & = \frac{ (\alpha \beta C_t + \beta )^2 }{ ( \alpha C_t - 1 )^2 } + \frac{ ( \alpha C_t - 1 )^2 }{ (\alpha \beta C_t + \beta )^2 } \\
		& = \frac{ (\alpha \beta C_t + \beta )^4 +( \alpha C_t - 1 )^4}{
			(\alpha \beta C_t + \beta )^2 ( \alpha C_t - 1 )^2
		} \\
		& = \frac{2 C_t^4 + 12 \alpha^2 C_t^2 + 2 }{
			\beta^2 (C^4_t - 2 \alpha ^2 C_t^2 + 1 )
		} \\
		& =  \frac{ 2 \frac{12}{t} + 12 \alpha^2 }{ \beta^2 \frac{12}{t} - 2 \alpha^2 \beta^2 }=  \frac{  12 + 6 t \alpha^2 }{ 6\beta^2 -  \alpha^2 t \beta^2 }  .
	\end{align*}
	Since $s = \frac{\beta^2(12 - 2 \alpha^2 t )}{2 + t \alpha^2 }$, we can express
	\[
		\frac{12}{s} =  12 \cdot \frac{2 + t \alpha^2 }{\beta^2(12 - 2 \alpha^2 t )} = \eta^2 + \frac{1}{\eta^2},
	\]
	which coincides with Equation \eqref{equ:C_t}.
	It yields that $ \eta $ is one of the conjugate elements of $ C_{s} $, and therefore the image of $ l_1(t)  $ is just one of the lines $ l_1(s), \ldots, l_4(s) $. 
\end{proof}
Now we are in the position to prove  the case $k=0$ in the assertion (1) of Theorem \ref{thmB}.  
\begin{proof}[Partial proof of Theorem \ref{thmB}]
It suffices to show that the restriction of the morphism of $ \Grp^0(\tilde{E}_7)$ to $(x,y)$ are contained in the group $ G'$.
Given a morphism $ \phi $ of $ \Grp^0(\tilde{E}_7)$, we know that $ \phi $ induces a morphism $ \phi_* $ on the zero Yau algebra, which preserves the set $L^4$ of four lines (not necessary in order). By composing with some matrix in $G'$ if necessary, we can assume the $ \phi_* $ preserves the set $L^4$ and the $ j$-function. We have argued that only the transformations arising from the Klein four-group leaves the $ j$-function invariants unchanged. According to Lemma \ref{lem:symmetric}, the  Klein four-group are presented by $G_0' $ with respect to coordinates $(x,y)$. 
Thus, we conclude that the transformations on $(x,y)$ preserving the set of the four lines generate the group $ G'$, which establishes the case $k=0$ of Theorem \ref{thmB}.
\end{proof}

\subsection{The first Yau algebra of the $\tilde{E}_{7}$-family}
The first moduli algebra of the $\tilde{E}_{7}$-family is given by 
\[
	\mathcal{A}^1(\tilde{E}_7; t) = \mathbb{C} [[x,y,z]]/ I_1(t)
\]
where 
\[ 
I_1(t) = \left \langle  2 x^4 + t x^2y^2, 2 y^4 + tx^2y^2, xy^3, yx^3 , xz ,y z ,z^2  \right \rangle.
\]
For $ t \not\in \{ 0, \pm 6 \} $,
we construct the $ \mathbb{C}$-linear basis of $ L_1(t) $ given by $ \E_1 , \ldots, \E_{23}$:
\begin{itemize}
	\item degree 0: \\
	$ \E_1 = z \partial_z $ and $ \E_2 = x \partial_x + y \partial_y $;
	\item degree 1: \\
	$ \E_3 = x^2 \partial_x $, $ \E_4 = y^2 \partial_x $, $\E_5 = xy \partial_x $, $ \E_6 = x^2 \partial_y $, $ \E_7 = y^2 \partial_y $, $ \E_8 = xy \partial_y $, $ \E_9 = z \partial_x$,$ \E_{10} = z \partial_y$, 
	$ \E_{11}= (2 x^3  + t x y^2 )\partial_z $, $ \E_{12}= (2 y^3  + t x^2 y )\partial_z $;
	\item degree 2: \\
	$ \E_{13} = x^3 \partial_x $, $ \E_{14} = x^2 y \partial_x $, $ \E_{15} = x y^2 \partial_x $, $ \E_{16} = y^3 \partial_x $, \\
	$ \E_{17} = x^3 \partial_y $, 
	$ \E_{18} = x^2 y \partial_y $, $ \E_{19} = x y^2 \partial_y $, $ \E_{20} = y^3 \partial_y $,  $\E_{21} = x^2 y^2 \partial_z $;
	\item degree 3: \\
	$ \E_{22} = x^2 y^2 \partial_{x} $ and $ \E_{23} = x^2 y^2 \partial_y $. 
\end{itemize}

We denote by $ [-,-] $ the Lie bracket of $ L^1(\tilde{E}_7; t) $ and use the notations $[U,V]$ and $ [v,U] $ as in Notation \ref{Nt:UV}.  We aim to construct four distinct lines in a canonical order from the Lie structure of $ L^1(\tilde{E}_7; t) $. 

\begin{lem}\label{lem:fourlines}
	There exists a sequence $(H_1,H_2,H_3,H_4)$ of four lines in $ L^1(\tilde{E}_7; t) $ which is invariant up to homogeneous isomorphisms. 
\end{lem}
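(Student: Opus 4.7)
The plan is to construct four canonical lines inside the top graded component of $L^1(\tilde{E}_7; t)$, modelled on the construction for the zeroth Yau algebra in \cite{Seeley1990Variation}, where the slopes of the four lines in $\langle e_9, e_{10}\rangle$ were characterized by a quartic equation on the coefficients.

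First I would isolate the two-dimensional subspace $H := \langle \E_{22}, \E_{23}\rangle$, i.e.\ the degree-three graded piece of $L^1(\tilde{E}_7; t)$. Since a homogeneous isomorphism preserves the weight grading, $H$ is automatically sent to the analogous subspace for $s$. Intrinsically, $H$ is characterized from the Lie structure as the subspace of elements $v$ annihilated under bracket with every positive-degree derivation (equivalently, the socle of the nilpotent part); the grading itself is recovered via the adjoint eigenspace decomposition of a canonical Euler element generating the center of the reductive part of $L_0$.

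Next, I would extract four distinguished lines in $H$ as the zeros of a canonical quartic form $Q\colon H \to \mathbb{C}$ built from iterated Lie brackets. The natural multilinear data are the pairing $L_1 \otimes L_2 \to H$ coming from the bracket (which lands in $L_3 = H$), combined with the adjoint action of $L_0$ on $H$. Combining these I expect to produce, for $v = \alpha \E_{22} + \beta \E_{23}$, a homogeneous polynomial in $(\alpha,\beta)$ of degree four whose explicit evaluation in the given basis $\{\E_i\}$ is proportional to $\alpha^4 + \beta^4 + t\alpha^2\beta^2$, mirroring the defining polynomial of the $\tilde{E}_7$-family. The four roots of this quartic in $\mathbb{P}(H) \cong \mathbb{P}^1$ define the lines $H_1, \ldots, H_4$. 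Invariance then follows formally: any homogeneous isomorphism $\phi_*\colon L^1(\tilde{E}_7; t) \to L^1(\tilde{E}_7; s)$ restricts to a linear isomorphism $H_t \to H_s$ pulling back the quartic form for $s$ to a nonzero scalar multiple of that for $t$, and hence permutes the four zero lines.

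The main obstacle will be the construction and verification of this canonical quartic: identifying the correct combination of iterated brackets whose output is exactly a nonzero multiple of $\alpha^4 + \beta^4 + t\alpha^2\beta^2$ requires careful bookkeeping of the structure constants of $L^1(\tilde{E}_7; t)$, and checking independence from auxiliary choices (e.g.\ the choice of a particular basis of $L_0$ or $L_2$) relies on uniqueness statements within the canonical subspaces. Once the quartic is in place, promoting the unordered set to a canonical sequence $(H_1,\ldots,H_4)$ follows by exploiting residual discrete symmetry: the factorization $\alpha^4 + \beta^4 + t\alpha^2\beta^2 = (\alpha^2 - u\beta^2)(\alpha^2 - v\beta^2)$ with $u + v = -t$, $uv = 1$ canonically splits the four lines into two pairs, and the sign ambiguity within each pair can be disambiguated using a canonical involution of $H$ induced from the Lie structure.
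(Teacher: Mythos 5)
Your route is genuinely different from the paper's: the paper builds the four lines one at a time in the \emph{degree-two} graded piece, via an explicit chain of bracket constructions ($\langle \E_1\rangle$, the eigenspace decomposition of $[\E_1,-]$ on $D^1$, the nested subspaces $Z\subseteq V$ and $Z\subseteq U$, and then $H_1=\{x\in[V,V]:[x,V]=0\}$, $H_2=[U,U]\cap[V,Z]$, $H_3=[V^*,V^*]$, $H_4=[V,U^*]\cap\langle H_1,H_2\rangle$), so that each line comes with its own defining recipe and the quadruple is canonically ordered. Your proposal has two genuine gaps. First, the canonical quartic on $\langle\E_{22},\E_{23}\rangle$ is never constructed: you only assert that some combination of iterated brackets should yield $\alpha^4+\beta^4+t\alpha^2\beta^2$. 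Since the degree-three piece is the top graded component, it brackets trivially with everything of positive degree, so a form \emph{on} it must be manufactured by dualizing the pairing $D^1\times D^2\to D^3$ against some canonical tensor, and no such tensor is exhibited; this is the entire content of the lemma, not bookkeeping. (For comparison, in the zeroth Yau algebra the analogous four slopes satisfy $C^2+C^{-2}=12/t$, not $C^2+C^{-2}=-t$, so even your expected answer is doubtful.)

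Second, and fatally for the statement as written: no \emph{ordered} invariant quadruple of lines can live in the degree-three piece. The Klein four-group $G_0'=\{\Id, B_{-1,0},B_{0,1},B_{0,-1}\}$ consists of automorphisms of $\mathcal{A}^1(\tilde{E}_7;t)$ itself, hence induces automorphisms of $L^1(\tilde{E}_7;t)$; on $\langle x^2y^2\partial_x, x^2y^2\partial_y\rangle$ the substitution $y\mapsto -y$ sends a line of slope $C$ to the line of slope $-C$, and $x\leftrightarrow y$ sends slope $C$ to $1/C$. Any $G_0'$-stable set of four lines with slopes $\{\pm\sqrt u,\pm\sqrt v\}$, $uv=1$, is therefore permuted \emph{nontrivially} by these automorphisms (this is exactly the Klein-four action of Lemma \ref{lem:symmetric}), so no automorphism-invariant ordering of them exists; your proposed disambiguation via the factorization into two quadratics and a ``canonical involution'' cannot be carried out. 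The paper places its lines in degree two precisely where each $H_i$ is fixed (as a line) by $G_0'$, which is what makes an invariant sequence, and hence a single well-defined cross-ratio for the Torelli argument, possible. Your construction could at best recover an invariant unordered set, which is weaker than the lemma claims and than what the subsequent proof of Theorem \ref{thmB} uses (the invariance of the specific degree-two line $H_1$).
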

\begin{proof}

Denote by $ D^i $ the weighted homogeneous part of $ L^{1}(\tilde{E}_7 ; t) $ of degree $ i $. The existence of the four lines can be verified  through the following construction procedure:
\begin{enumerate}
	\item The subspace $ \langle \E_{1} \rangle $ of $D^0$. 

\item The decomposition of $D^1$: 
\begin{equation}\label{equ:D1}
	D^1 = D^1_0 \oplus D^1_{1} \oplus D^{1}_{-1} 
\end{equation}
where $ D_0^{1} = \langle \E_{3} , \ldots, \E_{8} \rangle $, and $ D_{1}^1 = \langle  \E_{9}, \E_{10} \rangle $ and $ D_{-1}^{1} = \langle \E_{11}, \E_{12}\rangle $.

\item  The $4$-dimension subspace $ V $ of $ D_{0}^{1} $ generated by 
\begin{equation}\label{equ:Z}
	z_1 = x \E_2 = \E_3 + \E_8 , \quad  z_2 = y \E_2 = \E_5 + \E_7,
\end{equation}
and 
\begin{equation}\label{equ:V}
	v_1 = 4t \E_4 +8 \E_3 + (t^2-4) \E_8, \quad  v_2 = 4t \E_6 +8 \E_7 + (t^2-4) \E_5.  
\end{equation}

\item  The $2$-dimensional subspace $ Z $ of $ V $ generated by $ z_1 $ and $ z_2 $ with property $[Z,Z] = 0$. 

\item  The line $ H_1 $ of $[V,V] \subseteq D^{2} $ with director vector 
\begin{align*}
	h_1 &= (tx^2 + 2y^2) y \partial_x - (ty^2 +2x^2) x \partial_y\\
	&=t \E_{14} + 2 \E_{16} - 2 \E_{17} - t\E_{19} .
\end{align*}
\item  The $4$-dimensional subspace $U $ containing $Z$ with basis $ z_1, z_2 $ and 
\begin{equation}\label{equ:U}
	u_1 = \frac{t}{3} \E_3+ 2 \E_4  - \frac{2t}{3} \E_8, \quad  u_2 = \frac{t}{3} \E_7 + 2 \E_6 - \frac{2t}{3} \E_5 ,
\end{equation}
which verifies the property $ Z = U \cap V $. 
\item The line $H_2$, being the intersection of  $ [U,U] $ and $ [ V,Z ] $. 



\item  The complementary space $V^*$ of $Z$ in $V $, and $ H_3 = [V^* ,V^* ] $.

\item The complementary space $U^*$ of $Z$ in $U $, and $ H_4 := [V,U^*] \cap \langle H_1,H_2 \rangle $.  
\end{enumerate}

The detailed constructs are given as follows:

(1) By restricting the Lie bracket to 
\[
	[-,-] : D^0 \times D^3 \to D^3 ,  
\]
 we find that the vectors $ v \in D^{0}$ subject to the property 
\[
	[v , D^3] = 0 
\]
are contained in $ \langle \E_1 \rangle  $.
Therefore, $ \langle \E_1 \rangle $ is an invariant linear subspace.

(2) The vector $\E_{1}$ induces the linear map:
\[
	[\E_1, -] : D^1 \to D^1
\]
with eigenvalues $ 0, 1, -1 $. Hence, there exists the Jordan decomposition \eqref{equ:D1}, where the eigenspace $D_0^1,D_1^1,D_{-1}^1 $ correspond to the eigenvalues $ 0, 1, -1 $.

(3) The subspace $ V \subseteq D^1_0 $ is defined as the collection of vectors $ v\in D^1_0 $ such that
\[
	[v, D^{1}_{-1}] = 0 . 
\]  
We can verify that $ V $ is spanned by $v_1, v_2, z_1, z_2 $ in Equations \eqref{equ:V} and \eqref{equ:Z}.

(4) We define $ Z $ to be the collection of $ x \in V$ such that there exists some $ y \in V $, linearly independent on $x $, subject to 
\[
	[x,y] = 0 . 
\]
Equivalently, for such $x$, the kernel of $ [x,-] : V \to V $ is at least $2$-dimensional.  It is clearly that $ Z $ is invariant up to isomorphism. 
Notice that $ [z_1, z_2] = 0 $, so $ Z $ contains the linear subspace generated by $z_1, z_2 $.  The equality $Z = \langle z_1, z_2 \rangle $ can be checked by observing that the two bilinear maps
\[
	[-,-] :V/ Z \times Z \to D^2 
\]
and 
\[
	[-,-] : V/ Z \times V/ Z \to D^2/ [V,Z] 
\]
are non-degenerate.

(5) We define $ H_1 $ as the collection of $ x \in [V,V] $ such that 
$[x, V] = 0 $. 
A direct computation shows that $ H_1 $ is  $1$-dimensional with basis $ h_1$.

(6) It can be checked that 
\[
	h_1 = [u_2,z_1] - [u_1, z_2] = (t x^2y + 2y^3) \partial_x - (t y^2 x + 2 x^3)\partial_y
\]
where $ u_1 , u_2 \in D^1 $ are defined in \eqref{equ:U}.
Therefore, the space $ U $ spanned by $ u_1, u_2, z_1, z_2 $ is the minimal $4$-dimensional space such that 
\[
	H_1 \subseteq [U,Z]  \text{ and } Z\subseteq U.
\]
By construction, $ U $ is also invariant.

(7) It is straightforward to show that 
\[
	H_2 : =  [U,U] \cap [V,Z]
\]
is $1$-dimensional with basis 
\begin{align*}
	h_2 & = [v_1, z_2] - [v_2 ,z_1] = \frac{3}{2}[u_1,u_2] \\
	& = ( (t^2 -12 ) x^2y -4t y^3 )\partial_x - ((t^2 -12 ) y^2x - 4t x^3) \partial_y \\
	& = (t^2-12) \E_{15} -4t \E_{16} + 4 t \E_{17} - (t^2-12) \E_{18}.
 \end{align*}
	
(8) The space $ V^* = \{ v \in V| [v, H_2] =0  \}$ is an invariant subspace of $ V $ with basis $v_1, v_2 $ as defined in \eqref{equ:V}. 
Then the linear subspace $ H_3 := [V^*, V^*] $ of $D^2$ is spanned by 
\[
h_3 : = [v_1, v_2] = \frac{4 t (t^2 -36)  }{3} h_1 - \frac{t^2 + 12 }{3} h_2  .
 \]
(9) Similarly, the space $ U^* = \{ u \in U | [u, H_3] =0  \}$ is an invariant subspace of $ V $. The intersection of $ [V, U^*] $ and $ \langle H_1, H_2 \rangle $ is spanned by the vector
\[
	h_4 = 5 (t^2 + 12)^2 h_1 + 4 t (t^2 -36) h_2. 
\]
The line $ H_4$ spanned by $ h_4 $ is obviously invariant.
\end{proof}
With the aid of these four invariant lines, we can establish the Torelli-type theorem for $ L^1(\tilde{E}_7;t) $.
\begin{proof}[Proof of part (3) in Theorem \ref{thmB}]
	From Lemma \ref{lem:fourlines}, the sequence of lines $(H_1,H_2,H_3,H_4)$ is invariant under homogeneous isomorphisms. Moreover, the cross ratio of $ (H_1,H_2,H_3,H_4) $ is given by 
\[
	-\frac{5}{16} \frac{(t^2 +12)^3}{t^2(t^2-36)^2} = -\frac{5}{16} \frac{(t^2 +12)^3}{(t^2 +12)^3- 108 (t^2-4)^2} = \frac{5}{16} \frac{j(\tilde{E}_7)}{1- j(\tilde{E}_7)},
 \]
 which aligns with the $j$-function $j(\tilde{E}_7)$. Therefore, the first Yau algebra determines the isomorphism classes of the $\tilde{E}_7$-family.
\end{proof}
Finally, we would like to utilize the invariance of $H_1$ to compute the groupoid $ \Grp^1(\tilde{E}_7)$ for the case $k = 1 $ of the assertion (1) in Theorem \ref{thmB}. 
 \begin{proof}[Complete the proof of Theorem \ref{thmB}]
	It suffices to demonstrate that, when restricting to the coordinates $(x,y)$, the groupoid $\Grp^{\infty}(\tilde{E}_7)(t,s)$ coincides with $\Grp^{1}(\tilde{E}_7)(t,s) $. Specifically, the restriction of a morphism of $\Grp^{1}(\tilde{E}_7)(t,s)$ to $ (x,y) $ induces a homogeneous homomorphism of $ x^4 + y^4 + tx^2y^2$.   
 Notice that this, together with Lemma \ref{lem:groupoid}, implies that 
 $ \Grp^{k}(\tilde{E}_7)(t,s) $ are the same up to transformations along the $ z $-axis.

 Let $\phi$ be a morphism in $\Grp^{1}(\tilde{E}_7)(t,s)$, represented as the matrix
 \[ \begin{pmatrix}
	x   \\
	y   
 \end{pmatrix}  = \begin{pmatrix}
	a & b \\
	c & d
 \end{pmatrix}
 \begin{pmatrix}
	x'  \\
	y'  
 \end{pmatrix}
 \]
 in the coordinates $(x,y)$.
 Let $\Delta = ab - bc $ be the determinant. Then the inverse of $ \phi$ is given by 
 \[ \begin{pmatrix}
	x'   \\
	y'  
 \end{pmatrix}  = \frac{1}{\Delta} \begin{pmatrix}
	d & -b \\
	-c & a
 \end{pmatrix}
 \begin{pmatrix}
	x  \\
	y  
 \end{pmatrix}.
 \]
 Then, the partial derivatives can be expressed as
 \[
	\partial_{x} = \frac{d}{\Delta} \partial_{x'} - \frac{c}{\Delta} \partial_{y'}
 \]
 and
 \[
	\partial_{y} = -\frac{b}{\Delta} \partial_{x'} + \frac{a}{\Delta} \partial_{y'}.
 \]
 The image of the director vector  $ h_1 $ of $ H_1 $ under $ \phi_* $ is given by 
 \begin{align*}
	\phi_*(h_1) & =   (tx^2 y +2y^3)  (\frac{d}{\Delta} \partial_{x'} - \frac{c}{\Delta} \partial_{y'}) - (ty^2 x +2x^3)  (-\frac{b}{\Delta} \partial_{x'} + \frac{a}{\Delta} \partial_{y'}) \\
 &=  ( \frac{d}{\Delta} (tx^2 y +2y^3) +  \frac{b}{\Delta} (ty^2 x +2x^3)  )\partial_{x'}  - ( \frac{c}{\Delta} (tx^2 y +2y^3) +  \frac{a}{\Delta} (ty^2 x +2x^3)  )\partial_{y'} .
 \end{align*} 
 Since the line $H_1$ is invariant under $ \phi_*$, we may assume the equality
 \[
	\phi_*(h_1(t)) =  \lambda \cdot h_1(s)  
 \]
 holds for some constant $\lambda $.
 This is equivalent to 
 \[
	( \frac{d}{\Delta} (tx^2 y +2y^3) +  \frac{b}{\Delta} (ty^2 x +2x^3)  ) =\lambda( s x'^2 y' +2y'^3),
 \]
 and 
 \[
	( \frac{c}{\Delta} (tx^2 y +2y^3) +  \frac{a}{\Delta} (ty^2 x +2x^3)  ) = \lambda( s y'^2 x' +2x'^3).
 \]
 Therefore, we obtain:
 \begin{align*}
	 2 \lambda (x'^4 + y'^4 + s x'^2 y'^2) 
	= & \lambda( s x'^2 y' +2y'^3) y' + \lambda( s y'^2 x' +2x'^3) x'\\
	 = &(tx^2 y +2y^3) y + (ty^2 x +2x^3) x \\
	 =& 2 (x^4 + y^4 + tx^2 y^2).
 \end{align*}
 It follows that $ \phi $ comes from $ \Grp^{\infty}(\tilde{E}_7)(t,s) $. Therefore, the case $k=1$ of the assertion (1) in Theorem \ref{thmB} is proved.
 \end{proof}
 
\bibliographystyle{IEEEtran}
 \bibliography{paper}

\end{document}